\documentclass[12pt]{amsart}
\usepackage{fullpage}

\usepackage{setspace}
\RequirePackage{amssymb}
\usepackage{amsbsy}
\usepackage{enumerate}
\usepackage{mathrsfs}
\usepackage{undertilde}
\usepackage{amsmath}
\usepackage{url}

\theoremstyle{plain}
\newtheorem{theorem}{Theorem}[section]
\newtheorem*{theorem*}{Theorem}
\newtheorem*{mainthm*}{Main Theorem}
\newtheorem{lemma}[theorem]{Lemma} \newtheorem*{lem*}{Lemma}
\newtheorem{claim}[theorem]{Claim} \newtheorem*{claim*}{Claim}
 \newtheorem*{cor*}{Corollary}
 \newtheorem*{prop*}{Proposition}

\theoremstyle{definition}
\newtheorem{definition}[theorem]{Definition} \newtheorem*{defn*}{Definition}

\theoremstyle{remark}
\newtheorem{remark}[theorem]{Remark} \newtheorem*{rem*}{Remark}
 \newtheorem*{example*}{Example}
\newtheorem{question}[theorem]{Question} \newtheorem*{question*}{Question}

\newcommand{\Ord}{\mathrm{Ord}}

\newcommand{\forces}{\mathbin{\Vdash}}
\newcommand{\bfSigma}{\utilde{\bf{\Sigma}}}
\newcommand{\bfPi}{\utilde{\bf{\Pi}}}
\newcommand{\bfDelta}{\utilde{\bf{\Delta}}}
\DeclareMathOperator{\powerset}{\mathcal{P}}

\DeclareMathOperator{\Col}{Col}
\DeclareMathOperator{\crit}{crit}

\DeclareMathOperator{\rank}{rank}

\begin{document}

\title{On forcing projective generic absoluteness from strong cardinals}\thanks{Part of this research was conducted at the University of California, Irvine, where the author was supported by the National Science Foundation grant DMS-1044150}

\author{Trevor M. Wilson}

\address{Department of Mathematics\\Miami University\\Oxford, Ohio 45056\\USA}
\email{twilson@miamioh.edu} 
\urladdr{https://www.users.miamioh.edu/wilso240}

\begin{abstract}
 W.H.~Woodin showed that if $\kappa_1 < \cdots < \kappa_n$ are strong cardinals then two-step $\bfSigma^1_{n+3}$ generic absoluteness holds after collapsing $2^{2^{\kappa_n}}$ to be countable.  We show that this number can be reduced to $2^{\kappa_n}$, and to $\kappa_n^+$ in the case $n = 1$, but cannot be further reduced~to~$\kappa_n$.
\end{abstract}

\maketitle

\section{Introduction}

The goal of this article is to slightly sharpen a theorem of W.H.~Woodin regarding forcing generic absoluteness from strong cardinals. We begin with a brief introduction to generic absoluteness principles and a summary of their relationship to large cardinals.
For a positive integer $n$, \emph{one-step $\bfSigma^1_n$ generic absoluteness} says that every $\bfSigma^1_n$ statement is absolute between $V$ and all generic extensions, and \emph{two-step $\bfSigma^1_n$ generic absoluteness} says that for every generic extension of $V$, every $\bfSigma^1_n$ statement is absolute between it and all further generic extensions. Crucially, two-step generic absoluteness allows real parameters from generic extensions. \emph{Projective generic absoluteness} (either one-step or two-step) says that $\bfSigma^1_n$ generic absoluteness holds for every positive integer $n$.

From Shoenfield's absoluteness theorem (see, for example, Kanamori \cite[Theorem 13.15]{KanHigherInfinite}) it follows that one-step $\bfSigma^1_2$ generic absoluteness is a theorem of ZFC and therefore two-step $\bfSigma^1_2$ generic absoluteness is also a theorem of ZFC. Generic absoluteness principles for  higher pointclasses in the projective hierarchy are independent of ZFC and are equiconsistent with the existence of large cardinals. For example, one-step $\bfSigma^1_3$ generic absoluteness is equiconsistent with the existence of a $\Sigma_2$-reflecting cardinal (see Feng, Magidor, and Woodin \cite[Corollary 3.1 and Theorem 3.3]{FenMagWoo}.)

Two-step $\bfSigma^1_3$ generic absoluteness has higher consistency strength than one-step $\bfSigma^1_3$ generic absoluteness: it is equivalent to the statement ``every set has a sharp''. Using Jensen's covering lemma, Woodin \cite{WooProjUnif} showed that if two-step $\bfSigma^1_3$ generic absoluteness holds then every set has a sharp. The converse implication originates with Martin and Solovay \cite[Lemma 5.6]{MarSolBasis}; see Kechris \cite[Section 2]{KecHomogeneousTrees} or Kanamori \cite[Section 15]{KanHigherInfinite} for a modern treatment of the subject. (A simpler proof of $\bfSigma^1_3$ generic absoluteness from sharps is given by Caicedo and Schindler \cite[Theorem 3]{CaiSchProjWellord}. However, elements of the Martin--Solovay argument will be needed later in this paper.)

At higher levels of complexity, generic absoluteness relates to strong cardinals. By results of Woodin and K.~Hauser, one-step and two-step projective generic absoluteness are both equiconsistent with the existence of infinitely many strong cardinals. More specifically, Woodin showed that if $\lambda$ is a limit of strong cardinals, then two-step projective generic absoluteness holds after forcing with $\Col(\omega,\lambda)$ (see Steel \cite[Corollary 4.8]{SteDMT},) and Hauser \cite[Theorem 3.14]{HauProjAbs} showed that if one-step projective generic absoluteness holds, then either there is an inner model with a Woodin cardinal or the core model $K$ has infinitely many strong cardinals.

For two-step generic absoluteness, the results of Woodin and Hauser give a level-by-level equiconsistency: for every positive integer $n$, two-step $\bfSigma^1_{n+3}$ generic absoluteness is equiconsistent with the existence of $n$ strong cardinals. Woodin showed that if there are $n$ strong cardinals $\kappa_1 < \cdots < \kappa_n$, then two-step $\bfSigma^1_{n+3}$ generic absoluteness holds after forcing with $\Col(\omega, 2^{2^{\kappa_n}})$ (see Steel \cite[Corollary 4.7]{SteDMT},) and Hauser \cite[Theorem 3.10]{HauProjAbs} showed that if two-step $\bfSigma^1_{n+3}$ generic absoluteness holds, then either there is an inner model with a Woodin cardinal or the core model $K$ has at least $n$ strong cardinals.
We will show that the cardinal $2^{2^{\kappa_n}}$ in Woodin's theorem can be reduced to $2^{\kappa_n}$:

\begin{theorem}\label{thm:sigma-1-n-plus-3}
 Let $\kappa_1 < \cdots < \kappa_n$ be strong cardinals where $n$ is a positive integer. Let $G\subset \Col(\omega,2^{\kappa_n})$ be a $V$-generic filter.  Then $V[G]$ satisfies two-step $\bfSigma^1_{n+3}$ generic absoluteness.
\end{theorem}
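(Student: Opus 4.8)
The plan is to follow Woodin's strategy for deriving projective generic absoluteness from strong cardinals, as in Steel's treatment \cite{SteDMT}, but with a more economical choice of the Suslin representations. Recall the shape of the argument. Using the $n$ strong cardinals one produces, in $V$, trees $T$ and $S$ on $\omega\times\omega\times\gamma$ for a suitable ordinal $\gamma$, and a class $\mathcal F$ of generic extensions of $V$, with the following property: for every $W$ in $\mathcal F$, every further generic extension $W[k]$ of such a $W$ (which again lies in $\mathcal F$), and every real $r$, one has, in both $W$ and $W[k]$, that $p[T_r]$ is the $\bfPi^1_{n+2}(r)$ set, $p[S_r]$ is the $\bfSigma^1_{n+2}(r)$ set, and these are complements. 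The tree $T$ is built by a Martin--Solovay construction: the top strong cardinal $\kappa_n$ witnesses that $\bfPi^1_{n+1}$ sets are homogeneously Suslin, hence, applying $\exists^{\mathbb R}$, that $\bfSigma^1_{n+2}$ sets are weakly homogeneously Suslin, and $T$ is the Martin--Solovay tree of that weakly homogeneous system, projecting to the complementary class $\bfPi^1_{n+2}$; the tree $S$ is the underlying Suslin representation of $\bfSigma^1_{n+2}$ (or a symmetric Martin--Solovay tree). Given such $T$ and $S$, the absoluteness is soft: for $W\in\mathcal F$, a further generic extension $W[k]$, and a real $r\in W$, a $\bfSigma^1_{n+3}$ statement $\exists x\,\phi(x,r)$ with $\phi$ a $\bfPi^1_{n+2}$ formula is equivalent, in both $W$ and $W[k]$, to the illfoundedness of the section $T_r$, and its negation to the wellfoundedness of $T_r$; since wellfoundedness of a fixed set-sized tree is absolute between transitive models of ZF containing it, $\bfSigma^1_{n+3}$ statements with real parameters from $W$ are absolute between $W$ and $W[k]$. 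Taking $\mathcal F$ to be the generic extensions of $V$ by posets of the form $\Col(\omega,2^{\kappa_n})\ast\dot{\mathbb Q}$, and invoking a routine reduction -- using the homogeneity of the L\'evy collapse -- of two-step $\bfSigma^1_{n+3}$ generic absoluteness in $V[G]$ to absoluteness across such extensions, one concludes that $V[G]$ satisfies two-step $\bfSigma^1_{n+3}$ generic absoluteness.

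The only place where $2^{\kappa_n}$ enters, rather than Woodin's $2^{2^{\kappa_n}}$, is in bounding the width $\gamma$ of the Martin--Solovay tree $T$, together with the domains of the homogeneity measures for $\bfSigma^1_{n+2}$, equivalently for $\bfPi^1_{n+1}$. The width of a Martin--Solovay tree for the complement of a weakly homogeneously Suslin set is bounded by the ultrapower images $j_\mu(\eta)$, with $\mu$ ranging over the measures of the system and $\eta$ bounding the ordinals on which they concentrate; since $\mu$ is a measure on a set of size $|\eta|$, one has $j_\mu(\eta)<(2^\eta)^+$, so the width is at most $2^\eta$, and each $\mu$ is a family of subsets of $\eta^{<\omega}$. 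The homogeneity system for $\bfPi^1_{n+1}$ obtained directly from an extender with critical point $\kappa_n$ has $\eta$ of size about $2^{\kappa_n}$, so $T$ has width about $2^{2^{\kappa_n}}$ and its measures are families of subsets of a set of size $2^{2^{\kappa_n}}$; this is what forces the collapse of $2^{2^{\kappa_n}}$. Instead I will show, using that $\kappa_n$ is strong, that $\bfPi^1_{n+1}$ sets are homogeneously Suslin via a system whose (countably complete) measures concentrate on $\kappa_n^{<\omega}$, i.e., via a homogeneous tree of width at most $\kappa_n$. Then $T$ has width below $(2^{\kappa_n})^+$, so may be taken on $\omega\times\omega\times\gamma$ with $\gamma\le 2^{\kappa_n}$, and its homogeneity measures are families of subsets of $\kappa_n^{<\omega}$; the same holds for $S$.

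With $T$ and $S$ so chosen, the collapse finishes the job. In $V[G]$ the ordinal $\gamma$ and the set $\powerset(\kappa_n^{<\omega})^V$ are countable, so $T$, $S$, and all the homogeneity measures are coded by reals and the projections $p[T_r]$, $p[S_r]$ are analytic in those reals; and because the homogeneity systems were built from the strong cardinals, the properties needed for the Martin--Solovay analysis -- countable completeness of the measures, and the agreement of each Martin--Solovay tree with the complement of its projection -- persist under $\Col(\omega,2^{\kappa_n})$ and under any further forcing over $V[G]$, so the pair $(T,S)$ has the property displayed in the first paragraph with respect to this $\mathcal F$. Feeding this into the soft argument there yields two-step $\bfSigma^1_{n+3}$ generic absoluteness in $V[G]$. (For $n=1$ one can push the width below $\kappa_1$ and replace the collapse by $\Col(\omega,\kappa_1^+)$, but this needs a further idea and is not required for Theorem~\ref{thm:sigma-1-n-plus-3}.)

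I expect the main obstacle to be the compression in the second paragraph: producing for $\bfPi^1_{n+1}$ a ``thin'' homogeneity system whose measures concentrate on $\kappa_n^{<\omega}$, in place of the ``fat'' one with measures on a set of size $2^{\kappa_n}$ that the extender argument delivers directly, without losing homogeneity or the absolute correctness of the resulting Martin--Solovay tree. This is where the strongness of $\kappa_n$ must be used; the natural route is a hull or reflection argument producing a thin homogeneity system for $\bfPi^1_{n+1}$ inside $V_{\kappa_n}$. A secondary, more routine point is verifying that the Martin--Solovay trees remain correct not merely under $\Col(\omega,2^{\kappa_n})$ but under the two-step iterations $\Col(\omega,2^{\kappa_n})\ast\dot{\mathbb Q}$ arising in the definition of two-step generic absoluteness; this is largely already contained in Woodin's argument.
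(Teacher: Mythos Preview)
Your proposal follows a genuinely different route from the paper, and the route you sketch has a real gap at exactly the point you flag as ``the main obstacle.''

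The paper does \emph{not} try to compress the homogeneity system for $\bfPi^1_{n+1}$ down to measures on $\kappa_n^{<\omega}$. Instead, it abandons measures entirely in the key step (the analogue of Steel's Theorem~4.5) and works directly with rank-function norms. Given the tree $T$ and the strong embedding $j$, for each $t\in j(\gamma)^{<\omega}$ one defines the norm $\varphi_t(x)=\rank_{j(j(T))_x}(j(t))$ on the complement of $p[j(T)]$; these norms form a proto-semiscale. The savings over Woodin's $2^{2^{\kappa}}$ comes from a counting argument: two nodes $t,t'$ are declared equivalent if the associated norms (one $j$-level down) induce the same prewellordering in every generic extension by a poset in $V_\kappa$, and this equivalence class is determined by a single element of $V_{\kappa+1}\cap L(j(T),V_\kappa)$. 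Hence there are at most $2^\kappa$ classes, and collapsing $2^\kappa$ lets one enumerate a set of representatives into a genuine semiscale. No thinning of trees or homogeneity systems is attempted.

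Your compression step, by contrast, is only asserted. You write that ``the natural route is a hull or reflection argument producing a thin homogeneity system for $\bfPi^1_{n+1}$ inside $V_{\kappa_n}$,'' but no such argument is given, and it is not clear that one exists at levels $n\ge 2$: the tree for $\bfPi^1_{n+1}$ arises in the inductive construction only \emph{after} collapses tied to $\kappa_1,\ldots,\kappa_{n-1}$, so producing it in $V$ with width below $\kappa_n$ together with a $\kappa_n$-homogeneity system is not automatic. There is also a concrete error in your third paragraph: the claim that ``countable completeness of the measures \ldots\ persist[s] under $\Col(\omega,2^{\kappa_n})$'' is false for measures concentrating on $\kappa_n^{<\omega}$, since once $2^{\kappa_n}$ is collapsed one can enumerate $\powerset(\kappa_n^{<\omega})^V$ in order type $\omega$ and diagonalize against the measure. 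The absolute complementation in Woodin's argument and in the paper's comes not from persistence of completeness but from the elementarity of $j$ together with $\powerset(\lambda)\subset M$; your sketch does not invoke this mechanism clearly.

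In short: the reduction from $2^{2^{\kappa_n}}$ to $2^{\kappa_n}$ is achieved in the paper by replacing ``count the measures'' with ``count the equivalence classes of norms,'' and the latter are indexed by subsets of $V_\kappa$ rather than by subsets of $\powerset(\kappa)$. Your proposal does not supply an alternative mechanism for this reduction.
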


Note that Theorem \ref{thm:sigma-1-n-plus-3} implies Woodin's theorem because $\Col(\omega,2^{\kappa_n})$ regularly embeds into $\Col(\omega,2^{2^{\kappa_n}})$.

\begin{remark}
 Theorem \ref{thm:sigma-1-n-plus-3} is optimal in the sense that two-step (or even one-step) $\bfSigma^1_{n+3}$ generic absoluteness may fail after collapsing every cardinal less than $2^{\kappa_n}$ to be countable. Suppose that $V = K$ and there are $n$ strong cardinals. Let $\kappa_1 < \cdots < \kappa_n$ be the first $n$ strong cardinals. Then GCH holds and in particular $2^{\kappa_n} = \kappa_n^+$, so two-step $\bfSigma^1_{n+3}$ generic absoluteness holds after forcing with $\Col(\omega,\kappa_n^+)$ by Theorem \ref{thm:sigma-1-n-plus-3}, but (as explained below) it fails after forcing with only $\Col(\omega,\kappa_n)$.

 In fact, in this situation even one-step $\bfSigma^1_{n+3}$ generic absoluteness fails after forcing with $\Col(\omega,\kappa_n)$ because there is a $\bfSigma^1_{n+3}$ statement that is not absolute between generic extensions of $V$ by $\Col(\omega,\kappa_n)$ and $\Col(\omega,\kappa_n^+)$. To see this, note that the number of $\mathord{<}\kappa_n$-strong cardinals less than $\kappa_n$ is $n-1$ because every cardinal that is strong up to a strong cardinal is strong. Therefore by the proof of Hauser \cite[Theorem 3.10]{HauProjAbs}, if we take a $V$-generic filter $G \subset \Col(\omega, \kappa_n)$ and a real $x \in V[G]$ coding the extender sequence of $K$ up to $\kappa_n$, the statement ``$\omega_1$ is the cardinal successor of $\kappa_n$ in $K$'' can be expressed in a $\Pi^1_{n+3}(x)$ way. This statement is true in $V[G]$ because $V = K$, but it becomes false after further forcing to collapse $\kappa_n^+$.
\end{remark}

One might still ask whether the cardinal $2^{\kappa_n}$ in Theorem \ref{thm:sigma-1-n-plus-3} can be replaced by $\kappa_n^+$. Although this remains open in the general case, we can prove it in the case $n = 1$:

\begin{theorem}\label{thm:sigma-1-4}
 Let $\kappa$ be a strong cardinal and let $G\subset \Col(\omega,\kappa^+)$ be a $V$-generic filter. Then $V[G]$ satisfies two-step $\bfSigma^1_4$ generic absoluteness.
\end{theorem}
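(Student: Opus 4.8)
The plan is to produce, inside $V[G]$, a single tree $T$ such that $p[T]$ is a universal $\bfPi^1_3$ set not only in $V[G]$ but in every generic extension of $V[G]$, and then to run the ``illfoundedness is absolute'' argument. Granting such a $T$: let $\exists x\,\phi(x,a)$ be a $\bfSigma^1_4$ formula with $\phi$ being $\bfPi^1_3$ and $a$ a real in some generic extension $W$ of $V[G]$, and let $T^a\in W$ be the section of $T$ (a tree on $\omega\times\gamma$ for an appropriate ordinal $\gamma$) obtained by freezing the coordinates that code $a$, so that $p[T^a]=\{x:\phi(x,a)\}$ in any model computing $p[T]$ correctly. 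Then $W\models\exists x\,\phi(x,a)$ iff $T^a$ is illfounded, and illfoundedness of $T^a$ is absolute between $W$ and any further generic extension $W[I]$; since $p[T]$ is robustly a universal $\bfPi^1_3$ set, the same equivalence holds in $W[I]$, so $\exists x\,\phi(x,a)$ is absolute between $W$ and $W[I]$. As $W$ and $W[I]$ were arbitrary, this is two-step $\bfSigma^1_4$ generic absoluteness in $V[G]$.

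Before constructing $T$ I would lighten the bookkeeping by a standard absorption reduction: two-step $\bfSigma^1_4$ generic absoluteness in $V[G]$ follows once we know that for every generic extension $W$ of $V[G]$ and every sufficiently large $\theta$, $\bfSigma^1_4$ statements with parameters in $W$ are absolute between $W$ and $W[\Col(\omega,\theta)]$; given arbitrary $W$ and $W[I]$ one picks $\theta$ above the size of the poset yielding $I$, collapses $\theta$ over $W[I]$, and uses $\mathbb{Q}_I*\dot\Col(\omega,\theta)\cong\Col(\omega,\theta)^W$ to chain the two collapse instances. So it suffices to have $p[T]$ correct in $\Col(\omega,\theta)$-extensions of $V[G]$, which is the natural setting for genericity iterations; moreover such an extension is itself $V[\Col(\omega,\theta')^V]$ for $\theta'$ as large as we wish, so Woodin's theorem already tells us the \emph{target} models are $\bfSigma^1_4$-correct --- what is missing is a witnessing tree living down in $W$. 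Now for the groundwork: $\kappa$ strong implies $\kappa$ measurable, so every set has a sharp in $V$, hence in $V[G]$ and in all its generic extensions; by the Martin--Solovay analysis of $\bfSigma^1_3$ there is then a tree $T_3$ (on $\omega\times u_\omega$, with $u_\omega$ the supremum of the uniform indiscernibles) whose projection is a universal $\bfSigma^1_3$ set, robustly across generic extensions of $V[G]$. The task reduces to making $T_3$ \emph{robustly weakly homogeneous}, so that its Martin--Solovay tree $T=\mathrm{ms}(T_3)$ projects to the complement of $p[T_3]$, i.e.\ to a universal $\bfPi^1_3$ set, again robustly.

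This is where the strong cardinal and the collapse come in. In $V$, let $\bar V$ be the transitive collapse of an elementary substructure $X\prec V_\theta$ of size $\kappa^+$ with $\kappa^+\cup\{\kappa\}\subseteq X$ and with $X$ containing extenders witnessing that $\kappa$ is strong; then $\bar V\models\mathrm{ZFC}^-$, $\bar V$ has a strong cardinal $\bar\kappa$, $\bar V$ is iterable (via pullback of the trivial iterability of $V$), and $\bar V$ is $\bfSigma^1_3$-correct. Crucially $|\bar V|=\kappa^+$, so $\bar V$ is coded by a real $z\in V[G]$; it is exactly the existence of a \emph{size-$\kappa^+$} such witness --- rather than one of size $2^{\kappa}$, as one seems forced to use when several strong cardinals are present --- that yields the improved collapse bound. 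Iterating $\bar V$ linearly at $\bar\kappa$, pushing the images of $\bar\kappa$ and of $\Ord^{\bar V}$ past $u_\omega$ as computed in the ambient model, one reads off from the iteration maps a system of countably complete measures witnessing weak homogeneity of $T_3$; set $T=\mathrm{ms}(T_3)$.

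It then remains to verify robustness: in any $\Col(\omega,\theta)$-extension of $V[G]$, $\bar V$ is still iterable (generic absoluteness of iterability for this structure), and since $\bar\kappa$ is \emph{strong} in $\bar V$ --- not merely measurable --- one may iterate $\bar V$ so that the image of $\bar\kappa$ stays strong while $\Ord^{\bar V}$ is carried above both the new reals and the collapsing poset, which is what lets the measure system absorb an arbitrary collapse; this is precisely the step at which strongness rather than measurability is essential. I expect the main obstacle to be this whole package: reproving the Martin--Solovay construction ``one level up'' relative to a countable iterable structure with a strong cardinal, i.e.\ checking that the measures extracted from iterating $\bar V$ genuinely witness weak homogeneity of $T_3$ and satisfy the countable-completeness and tower conditions making $p[\mathrm{ms}(T_3)]=\mathbb{R}\setminus p[T_3]$ hold robustly, which in turn leans on a Kechris--Martin-style correctness analysis of $\bfPi^1_3$ together with the $\bfSigma^1_3$-correctness and generic iterability of $\bar V$. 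By contrast, the cardinal-arithmetic bookkeeping that turns $2^{\kappa}$ into $\kappa^+$ should be routine once the right witness $\bar V$ has been isolated.
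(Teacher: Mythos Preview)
Your outline diverges substantially from the paper's proof, and the step you flag as ``the main obstacle'' is in fact a genuine gap rather than a routine verification. You propose to take a size-$\kappa^+$ hull $\bar V\prec V_\theta$, collapse it to a real, iterate it linearly at its strong cardinal $\bar\kappa$, and read off from the iteration a measure system witnessing weak homogeneity of the Martin--Solovay $\Sigma^1_3$ tree $T_3$. But you never say what these measures are or why they form a homogeneity system for $T_3$, and there is no standard construction of this kind: linear iteration of $\bar V$ by a measure on $\bar\kappa$ uses only measurability and produces indiscernibles, which is the input for the $\Pi^1_2$ (sharps) level, not for $\Pi^1_3$; invoking strongness of $\bar\kappa$ does not by itself upgrade this. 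Known routes to weak homogeneity of $T_3$ go through either the actual strongness embedding $j:V\to M$ (Woodin's argument, giving the $2^{2^\kappa}$ bound from the number of measures on $\kappa$) or a mouse with a Woodin cardinal; a countable iterable structure with merely a strong cardinal is not known to suffice, and your sketch gives no mechanism for it. The remark that ``it is exactly the existence of a size-$\kappa^+$ such witness \ldots\ that yields the improved collapse bound'' is therefore unjustified: the improvement in the paper does not come from finding a small hull of $V$.

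The paper's proof is quite different. It first splits on whether $\bfDelta^1_2$ determinacy holds in all generic extensions; if so, second periodicity gives definable $\Pi^1_3$ scales and one is done already in $V$. If not, one reflects the failure below $\kappa$ and fixes a real $z_0$ such that no generic extension has an inner model with a Woodin cardinal containing $z_0$. This anti-large-cardinal hypothesis lets one build the core model $K(z)$ over any real $z\ge_T z_0$ in any small extension, and Magidor's $u_2$-correctness argument (Lemma~\ref{lem:magidor}) shows that the Martin--Solovay tree $T_A$ for any $\Sigma^1_3$ property lies in $K(z)$. From CH in $K(z)$ one deduces $|V_{\kappa+1}\cap L(j(T_A),V_\kappa)|\le\kappa^+$. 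This is exactly the hypothesis of the sharper proto-semiscale lemma (Lemma~\ref{lem-strong-eta}) with $\eta=\kappa^+$, which then produces the desired $\Col(\omega,\lambda)$-absolute complement $\tilde T$ for $j(T_A)$ after collapsing $\kappa^+$. In short, the $\kappa^+$ bound comes from inner-model-theoretic thinness of $L(j(T_A),V_\kappa)$, not from a Skolem hull, and the complementing tree is built via rank norms rather than a weak-homogeneity measure system.
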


\begin{question}
 Let $\kappa_1 < \cdots < \kappa_n$ be strong cardinals where $n \ge 2$. Let $G \subset \Col(\omega,\kappa_n^+)$ be a $V$-generic filter.  Must $V[G]$ satisfy two-step $\bfSigma^1_{n+3}$ generic absoluteness?
\end{question}

Theorems \ref{thm:sigma-1-n-plus-3} and \ref{thm:sigma-1-4} are proved in Sections \ref{sec:proof-of-thm-sigma-1-n-plus-3} and \ref{sec:proof-of-thm-sigma-1-4} respectively. Some prior knowledge of tree representations of sets of reals would help the reader, although Section \ref{sec:tree-reps} contains a brief review of the subject. Section \ref{sec:proof-of-thm-sigma-1-4} requires some inner model theory, but nothing very technical.

\section{Tree representations of sets of reals}\label{sec:tree-reps}

In this section we briefly review some definitions regarding tree representations of sets of reals, where by ``reals'' we mean elements of the Baire space $\omega^\omega$. For more information on this topic, see Moschovakis \cite{MosDST2009} or Kechris and Moschovakis \cite{KecMosScales}. We also define a generalization of semiscales that we call \emph{proto-semiscales} (see Definition \ref{def:proto-semiscale} below.)

For a class $X$, a \emph{tree on $X$} is a subset of $X^{\mathord{<}\omega}$ that is closed under initial segments. When we consider trees on Cartesian products such as $\omega \times \Ord$, we conflate sequences of pairs with pairs of sequences. Because we require trees to be sets, a tree on $\omega \times \Ord$ is actually a tree on $\omega \times \gamma$ for some ordinal $\gamma$, but for simplicity we avoid naming $\gamma$ whenever possible.

For a tree $T$ on $\omega \times \Ord$ we let $[T]$ denote the set of branches of $T$, meaning the set of pairs $(x,f) \in \omega^\omega \times \Ord^\omega$ such that $(x\restriction n, f \restriction n) \in T$ for all $n <\omega$. Note that $[T]$ is a closed subset of $\omega^\omega \times \Ord^\omega$, and conversely every closed subset of $\omega^\omega \times \Ord^\omega$ is the set of branches of some tree on $\omega \times \Ord$. For a relation $R$ we use the notation $pR$ for the projection of $R$ along its last coordinate, so $p[T]$ is the set of all reals $x$ such that $(x,f) \in [T]$ for some $f \in \Ord^\omega$. If $A = p[T]$ then we say \emph{$T$ projects to $A$}, or $T$ \emph{is a tree representation of $A$}.

For a tree $T$ on $\omega \times \Ord$ and a real $x$, we define the \emph{section} of $T$ by $x$ as follows: $T_x = \{s \in \Ord^{\mathord{<}\omega} : (x \restriction |s|, s) \in T\}$.  Then $T_x$ is a tree on $\Ord$ and we have $x \in p[T]$ if and only if $T_x$ is illfounded. If $x \notin p[T]$ (so that $T_x$ is wellfounded) then we let $\rank_{T_x}$ denote the rank function on $T_x$.  For convenience we define $\rank_{T_x}(s)$ to be zero in the case $s \notin T_x$, so that $\rank_{T_x}$ is defined on all $s \in \Ord^{\mathord{<}\omega}$. For various finite sequences $s \in \Ord^{\mathord{<}\omega}$, the corresponding ordinals $\rank_{T_x}(s)$ provide various measures of how ``close'' $x$ is to $p[T]$.

If $T$ and $\tilde{T}$ are trees on $\omega \times \Ord$ and $\mathbb{P}$ is a poset, the pair $(T,\tilde{T})$ is called \emph{$\mathbb{P}$-absolutely complementing} if $p[\tilde{T}] = \omega^\omega \setminus p[T]$ in every generic extension of $V$ by $\mathbb{P}$. Because every generic extension of $V$ by $\mathbb{P}$ is contained in a generic extension of $V$ by $\Col(\omega,\lambda)$ where $\lambda = |\mathbb{P}|$, every $\Col(\omega,\lambda)$-absolutely complementing pair of trees is $\mathbb{P}$-absolutely complementing, so for our purposes it will suffice to consider Levy collapse posets.

For a set of reals $A$, a \emph{norm} on $A$ is a function $\varphi:A \to \Ord$. For every norm $\varphi$ on $A$, we may define a corresponding prewellordering $\le_{\varphi}$ of $A$ by $x \le_{\varphi} y \iff \varphi(x) \le \varphi(y)$. Two norms on $A$ are \emph{equivalent} if their corresponding prewellorderings are equal.

A \emph{semiscale} on $A$ is a sequence $\vec{\varphi} = (\varphi_i :i<\omega)$ of norms on $A$ such that for every sequence of reals $(x_n:n<\omega)$ in $A$ and every real $y$, if $(x_n:n<\omega)$ converges to $y$ and for each $i<\omega$ the sequence of ordinals $(\varphi_i(x_n): n<\omega)$ is eventually constant, then $y \in A$. Note that replacing each norm by an equivalent norm preserves this defining property of semiscales.

From a semiscale $\vec{\varphi} = (\varphi_i :i<\omega)$ on $A$ we may define the \emph{tree associated to $\vec{\varphi}$}, which is the tree on $\omega \times \Ord$ consisting of all finite sequences of pairs 
\[((x(0), \varphi_0(x)), \ldots, (x(n-1), \varphi_{n-1}(x)))\]
where $x \in A$ and $n < \omega$. The relevant consequence of this definition is that if $\tilde{T}$ is the tree associated to a semiscale on $A$, then $\tilde{T}$ projects to $A$. (This consequence is proved for the tree of a scale by Kechris and Moschovakis \cite[Section 6.2]{KecMosScales}, and the proof applies equally well to semiscales.)

The following generalization of semiscales will also be useful.

\begin{definition}\label{def:proto-semiscale}
 A \emph{proto-semiscale} on a set of reals $A$ is a set $\mathcal{C}$ of norms on $A$ such that for every sequence of reals $(x_n:n<\omega)$ in $A$ and every real $y$, if $(x_n:n<\omega)$ converges to $y$ and for each norm $\varphi \in \mathcal{C}$ the sequence of ordinals $(\varphi(x_n): n<\omega)$ is eventually constant, then $y \in A$.
\end{definition}

Note that replacing each norm by an equivalent norm preserves the defining property of proto-semiscales. Note also that if the sequence of norms $(\varphi_i :i<\omega)$ is a semiscale on $A$ then the set of norms $\{\varphi_i :i<\omega\}$ is a proto-semiscale on $A$, and if the set of norms $\mathcal{C}$ is a countable proto-semiscale on $A$ then every enumeration of $\mathcal{C}$ by $\omega$ is a semiscale on $A$.  If we have a proto-semiscale $\mathcal{C}$ on $A$ and we want to define an associated tree representation for $A$, we will first need to enumerate $\mathcal{C}$ by $\omega$.

\section{Proof of Theorem \ref{thm:sigma-1-n-plus-3}}\label{sec:proof-of-thm-sigma-1-n-plus-3}

The main ingredient in the proof of Theorem \ref{thm:sigma-1-n-plus-3} will be the following lemma. Woodin proved a version of this lemma with $2^{2^{\kappa_n}}$ in place of $2^{\kappa_n}$: see Steel \cite[Theorem 4.5]{SteDMT}. (That version also required the codomain $M$ of the elementary embedding $j:V \to M$ to have a bit more agreement with $V$, but this difference is not relevant to our desired application where $\kappa$ is a strong cardinal.)

\begin{lemma}\label{lem-strong-2-kappa}
 Let $\kappa$ and $\lambda$ be cardinals with $2^\kappa < \lambda$.
 Let $j : V \to M$ be an elementary embedding such that $\crit(j) = \kappa$ and $j(\kappa) > \lambda$ and $M$ is a transitive class with $\powerset(\lambda) \subset M$. Let $T$ be a tree on $\omega \times \Ord$ and let $G \subset \Col(\omega,2^\kappa)$ be a $V$-generic filter. Then in $V[G]$ there is a tree $\tilde{T}$ on $\omega \times \Ord$ such that the pair $(j(T), \tilde{T})$ is $\Col(\omega,\lambda)$-absolutely complementing.
\end{lemma}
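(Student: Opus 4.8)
The plan is to follow Woodin's construction of a Martin--Solovay-style tree complementing $j(T)$, as presented in Steel's proof of \cite[Theorem~4.5]{SteDMT}, but to organize it around a \emph{proto}-semiscale rather than a semiscale. Woodin's argument effectively produces a semiscale on the complement of $p[j(T)]$ whose length is of the order of $2^{2^\kappa}$, and collapsing that cardinal is needed so that the semiscale can be read off over $\omega$. The hope is that a proto-semiscale of size only $2^\kappa$ suffices; since $2^\kappa$ is countable in $V[G]$ it can there be enumerated in order type $\omega$, yielding a semiscale and hence, by the discussion in Section~\ref{sec:tree-reps}, the tree $\tilde{T}\in V[G]$. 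This enumeration will be the sole use of the generic $G$.

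First, working in $V$, I would fix a length-$\mu$ extender $E$ derived from $j$, with $\mu$ large enough that $\powerset(\lambda)\subseteq\Ult(V,E)$ and (by taking $\mu$ above the ordinal height of $j(T)$) that $j_E(T)=j(T)$, and replace $j$ by $j_E$; thus every ordinal below $j(\kappa)$ has the form $j(f)(a)$ for some $f\in V$ and finite $a\subseteq\mu$. I would then define a set $\mathcal{C}$ of norms on $A:=\{x\in\omega^\omega : j(T)_x\text{ is wellfounded}\}$, the norms measuring ranks $\rank_{j(T)_x}(s)$ for $s$ ranging over a set of finite sequences of ordinals below $j(\kappa)$ that is closed enough under the representation $s=j(f)(a)$ to capture a branch of any illfounded section of $j(T)$, but — and this is the point of the improvement — can be arranged to have cardinality at most $2^\kappa$, whereas Woodin's norms in effect range over a set of size up to $2^{2^\kappa}$. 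I would then check, by the usual argument, that $\mathcal{C}$ is a proto-semiscale on $A$: if reals $x_n\to y$ with each $j(T)_{x_n}$ wellfounded and each norm in $\mathcal{C}$ eventually constant along $(x_n)$, then a branch of an illfounded section $j(T)_y$ would, via its extender representation, yield a descending $\omega$-sequence of ordinals.

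Second, in $V[G]$ I would enumerate $\mathcal{C}$ in order type $\omega$, form the associated semiscale on $A$, and let $\tilde{T}$ be the tree associated to it. It then remains to show that $(j(T),\tilde{T})$ is $\Col(\omega,\lambda)$-absolutely complementing, i.e.\ that $p[\tilde{T}]=\omega^\omega\setminus p[j(T)]$ in every $V$-generic extension $V[H]$ by $\Col(\omega,\lambda)$ with $G\in V[H]$. Since $\tilde{T}$ is built only from the reals of $A$ in the ground model, neither inclusion is immediate in $V[H]$, and both require the Martin--Solovay argument. For $p[\tilde{T}]\subseteq\omega^\omega\setminus p[j(T)]$ one shows that a $\tilde{T}$-branch over a real $x$, which realizes $x$ as a limit of reals of $A\cap V$ along which every norm in $\mathcal{C}$ is constant, forces $j(T)_x$ to be wellfounded; here one uses that the sections $j(T)_z=j(T_z)$ for $z\in V$ consist entirely of $j$-images of finite sequences, so that a hypothetical branch of $j(T)_x$ is detected by norms in $\mathcal{C}$. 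For the reverse inclusion, given $x\in V[H]$ with $j(T)_x$ wellfounded, one must produce a $\tilde{T}$-branch over $x$, i.e.\ find for each $n$ a real $z\in A\cap V$ agreeing with $x$ on its first $n$ values and on the first $n$ norms of $\mathcal{C}$; the existence of such $z$ is forced over $V$ by the weakest condition of $\Col(\omega,\lambda)$ and is verified using the correctness hypothesis $\powerset(\lambda)\subseteq M$ together with the homogeneity of the collapse. The crux — and the step I expect to be the main obstacle — is to choose the index set of $\mathcal{C}$ small enough to have size $2^\kappa$ yet rich enough that these two verifications still go through for the new reals of $V[H]$.
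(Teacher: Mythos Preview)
Your overall strategy---build a proto-semiscale in $V$, collapse its index set to $\omega$ in $V[G]$, and take the associated tree---is exactly the paper's, and you have correctly located the crux: arranging that only $2^\kappa$ many norms suffice. But the proposal does not supply the idea that makes this work. Indexing norms by nodes $s=j(f)(a)$ via an extender representation does not by itself reduce anything: every node of $j(T)$ is of that form, and there is no evident bound of $2^\kappa$ on the pairs $(f,a)$ needed to capture an arbitrary branch of an illfounded $j(T)_y$. So as written the plan stops precisely at the obstacle you name.

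The paper's missing ingredient is to apply $j$ \emph{twice}. For $t\in j(\gamma)^{<\omega}$ one sets $\varphi_t(x)=\rank_{j(j(T))_x}(j(t))$, viewed as a norm on $A=(\omega^\omega\setminus p[j(T)])^{V[G][H]}$; the proto-semiscale property is then checked directly in $V[G][H]$, so the associated tree $\tilde T$ automatically projects to $A$ there and your separate two-inclusion verification is unnecessary (homogeneity of $\Col(\omega,\lambda)$ alone gives $\tilde T\in V[G]$). The size reduction comes from an equivalence on $j(\gamma)^{<\omega}$: declare $t\sim t'$ when the \emph{one-$j$} norms $\psi_t(x)=\rank_{j(T)_x}(t)$ and $\psi_{t'}$ induce the same prewellordering in every generic extension of $V$ by a poset in $V_\kappa$. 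Each class is determined by the set $S_t=\{(\mathbb P,p,\dot x,\dot y)\in V_\kappa : p\Vdash_{\mathbb P}\dot x\le_{\psi_t}\dot y\}$, which lies in $V_{\kappa+1}\cap L(j(T),V_\kappa)$, so there are at most $2^\kappa$ classes. Elementarity of $j$ together with $j(\kappa)>\lambda$ then transfers $t\sim t'$ to equivalence of the two-$j$ norms $\varphi_t,\varphi_{t'}$ on $A$, so a set $\sigma$ of $\le 2^\kappa$ representatives already gives a proto-semiscale. This double-$j$ device, with the equivalence counted by subsets of $V_\kappa$ definable from $j(T)$, is exactly the step your outline is missing.
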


\begin{remark}
 Woodin used a system of measures to build an absolute complement $\tilde{T}$ for $j(T)$ via a Martin--Solovay construction, and the cardinal $2^{2^\kappa}$ appeared as an upper bound on the number of measures on $\kappa$. In our proof, we will instead build a semiscale from norms corresponding to rank functions, and the cardinal $2^\kappa$ will appear as an upper bound on the number of norms required. Wilson \cite[Lemma 3.1]{WilUBGenAbs} used a similar argument. (A version of Lemma \ref{lem-strong-2-kappa} was initially included in that paper, but later removed because it threatened to take the paper on an overly long tangent.)
\end{remark}

Once Lemma \ref{lem-strong-2-kappa} is proved, Theorem \ref{thm:sigma-1-n-plus-3} will follow routinely as in Steel \cite[Section 4]{SteDMT}. The argument is identical except for the substitution of $2^{\kappa_n}$ for $2^{2^{\kappa_n}}$, so we only give a brief summary here. We begin with the Martin--Solovay tree representations of $\Pi^1_2$ sets, which exist because every set has a sharp, and may be taken to project to the intended sets in arbitrarily large generic extensions. Then we build tree representations of $\Sigma^1_3,\Pi^1_3, \Sigma^1_4, \Pi^1_4,\ldots,\Sigma^1_{n+2}, \Pi^1_{n+2}$ sets, which may also be taken to project to the intended sets in arbitrarily large generic extensions. In the end, the existence of such tree representations of $\Pi^1_{n+2}$ sets will imply two-step $\bfSigma^1_{n+3}$ generic absoluteness by a standard argument involving the absoluteness of wellfoundedness.

To go from $\Pi^1_{i+1}$ to $\Sigma^1_{i+2}$ is trivial. To go from $\Sigma^1_{i+2}$ to $\Pi^1_{i+2}$ we collapse $2^{\kappa_i}$, where $\kappa_i$ is the $i\text{th}$ strong cardinal, and apply Lemma \ref{lem-strong-2-kappa}. Because we may take $\lambda$ to be arbitrarily large in Lemma \ref{lem-strong-2-kappa}, this step preserves the property that our trees project to the intended sets in arbitrarily large generic extensions. Note that the difference between the two trees $T$ and $j(T)$ in Lemma \ref{lem-strong-2-kappa} is not a problem here. In the case $i = 1$ for example, let $G \subset \Col(\omega, 2^{\kappa_1})$ be a $V$-generic filter, let $H \subset \Col(\omega, \lambda)$ be a $V[G]$-generic filter, let $A$ be a $\Sigma^1_3$ property, and let $T$ be a tree in $V$ such that every generic extension of $V$ by a poset of cardinality at most $\lambda$ satisfies $p[T] = \{x \in \omega^\omega : A(x)\}$. Then the model $V[G][H]$ satisfies $p[T] = \{x \in \omega^\omega : A(x)\}$ and by the elementarity of $j$ the model $M[G][H]$ satisfies $p[j(T)] = \{x \in \omega^\omega : A(x)\}$.  These two models have the same reals because $\powerset(\lambda) \subset M$, so $A$ defines the same $\Sigma^1_3$ set in both models.

It therefore remains to prove Lemma \ref{lem-strong-2-kappa}. In fact, we will prove a sharper version that will be needed for the proof of Theorem \ref{thm:sigma-1-4} in Section \ref{sec:proof-of-thm-sigma-1-4}:

\begin{lemma}\label{lem-strong-eta}
 Let $\kappa$, $\eta$, and $\lambda$ be cardinals with $\kappa \le \eta < \lambda$.
 Let $j : V \to M$ be an elementary embedding such that $\crit(j) = \kappa$ and $j(\kappa) > \lambda$ and $M$ is a transitive class with $\powerset(\lambda) \subset M$. Let $T$ be a tree on $\omega \times \Ord$ with $|V_{\kappa+1} \cap L(j(T),V_\kappa)| \le \eta$ and let $G \subset \Col(\omega,\eta)$ be a $V$-generic filter. Then in $V[G]$ there is a tree $\tilde{T}$ on $\omega \times \Ord$ such that the pair $(j(T), \tilde{T})$ is $\Col(\omega,\lambda)$-absolutely complementing.
\end{lemma}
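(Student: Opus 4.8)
The plan is to imitate Woodin's argument (Steel \cite[Theorem 4.5]{SteDMT}), which runs along the Martin--Solovay template, but to replace the system of (up to $2^{2^\kappa}$ many) $\kappa$-complete ultrafilters on subsets of $V_\kappa$ by a family of norms read off from rank functions and indexed by elements of $V_{\kappa+1}$; restricting to the norms that are canonically definable inside $L(j(T),V_\kappa)$ leaves only $\lvert V_{\kappa+1}\cap L(j(T),V_\kappa)\rvert\le\eta$ of them, so collapsing $\eta$ makes the family countable, hence it enumerates into an honest semiscale whose associated tree $\tilde T$ can be formed in $V[G]$. Before getting to that I would dispose of the routine reductions. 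Since $\eta\le\lambda$, the two-step iteration $\Col(\omega,\eta)*\Col(\omega,\lambda)$ is forcing equivalent to $\Col(\omega,\lambda)$, so it suffices to show that whenever $H\subset\Col(\omega,\lambda)$ is generic over $V[G]$ we have $p[\tilde T]=\omega^\omega\setminus p[j(T)]$ in $V[G][H]$. The hypothesis $\powerset(\lambda)\subset M$ guarantees that $\Col(\omega,\lambda)\in M$, that $H$ is $M$-generic, and that $M[H]$ and $V[G][H]$ have the same reals; together with $j(\kappa)>\lambda$ it ensures that $j(T)$, the sections $j(T)_x$ for reals $x$ of $V[G][H]$, and the rank functions of the wellfounded such sections are all available to $M[H]$, where we may reason about them through $j$. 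Set $N=L(j(T),V_\kappa)$ and $\mathcal{I}=V_{\kappa+1}\cap N$, so $\lvert\mathcal{I}\rvert\le\eta$; in $V[G]$ the set $\mathcal{I}$ is countable, and we fix an enumeration $\mathcal{I}=\{a_i:i<\omega\}$.

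Next I would define, for each real $x$ with $j(T)_x$ wellfounded and each $a\in\mathcal{I}$, a norm value $\varphi_a(x)\in\Ord$ obtained from the rank function $\rank_{j(T)_x}$ by gauging it against $j(a)$ (a subset of $V_{j(\kappa)}^{M}$ with $j(a)\cap V_\kappa=a$), with a dummy value when $j(a)$ is irrelevant to $x$. The point of keeping one norm for each element of $\mathcal{I}$ is that, by the strongness of $\kappa$, the rank function of any wellfounded section $j(T)_x$ that can appear in a $\Col(\omega,\lambda)$-extension is ``seen by''---reflects to---some $a\in\mathcal{I}$, and it is this reflection that a branch of $\tilde T_x$ will encode. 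I would then verify that $\mathcal{C}=\{\varphi_a:a\in\mathcal{I}\}$ is a proto-semiscale on $A:=\omega^\omega\setminus p[j(T)]$, in $V[G]$ and in its $\Col(\omega,\lambda)$-extensions alike: if $x_n\to y$ with the $x_n$ in $A$ and each $\varphi_a(x_n)$ eventually constant, then the stabilized values assemble, through $j$, into a rank function witnessing that $j(T)_y$ is wellfounded. Since $\mathcal{C}$ is countable in $V[G]$, the enumeration $(\varphi_{a_i}:i<\omega)$ is a semiscale on $A$ there (cf.\ Definition \ref{def:proto-semiscale} and the remarks after it), and I take $\tilde T\in V[G]$ to be the tree associated to it, so that $p[\tilde T]=A$ holds in $V[G]$.

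It remains to check that $(j(T),\tilde T)$ is $\Col(\omega,\lambda)$-absolutely complementing, i.e.\ that $p[\tilde T]=\omega^\omega\setminus p[j(T)]$ in $V[G][H]$. The inclusion $p[\tilde T]\subseteq\omega^\omega\setminus p[j(T)]$ is the easy direction and is fully absolute: a branch of $\tilde T_x$ supplies, for each $n$, a real $z_n\in A$ lying in $V[G]$ that agrees with $x$ on its first $n$ coordinates and whose relevant norm values are frozen, and the proto-semiscale computation above then builds a rank function on $j(T)_x$ using nothing more than membership in the fixed trees $\tilde T$ and $j(T)$ and the absoluteness of wellfoundedness. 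The reverse inclusion is the crux of the lemma, and the step I expect to be the main obstacle: given a real $x$ of $V[G][H]$ with $j(T)_x$ wellfounded, one must produce a branch of $\tilde T_x$, that is, find for each $n$ a real $z_n\in A$ of $V[G]$ agreeing with $x$ past its first $n$ coordinates and with norm values lying along a single fixed branch. Here the strong embedding does the work: in $M[H]$, which has the same reals as $V[G][H]$, the rank function of $j(T)_x$ is an object that, because $\kappa=\crit(j)$, reflects to data on $V_\kappa$ belonging to $N$, and this reflected data simultaneously pins down the correct branch among the norms $(\varphi_{a_i})$ and yields the ground-model approximations $z_n$. Making this reflection precise---in particular, controlling the norm values along the approximating sequence so that they do not grow without bound, which is exactly what forces the index set to be all of $V_{\kappa+1}\cap N$ rather than any single norm---is the technical heart of the proof. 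Once it is carried out, Lemma \ref{lem-strong-eta} is proved; taking $\eta=2^\kappa$ gives Lemma \ref{lem-strong-2-kappa} (note $\lvert V_{\kappa+1}\cap N\rvert\le\lvert V_{\kappa+1}\rvert=2^\kappa$, since $\kappa$, being measurable, is inaccessible), and hence Theorem \ref{thm:sigma-1-n-plus-3}.
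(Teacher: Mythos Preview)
Your overall strategy matches the paper's---replace Woodin's measures by rank-function norms, observe that only $\lvert V_{\kappa+1}\cap L(j(T),V_\kappa)\rvert\le\eta$ inequivalent norms are needed, collapse $\eta$ to enumerate them as a semiscale, and take the associated tree---but the proposal is vague exactly where the proof is delicate, and the vagueness hides the key technical device. You never actually define $\varphi_a$; ``gauging $\rank_{j(T)_x}$ against $j(a)$'' for an arbitrary $a\in V_{\kappa+1}\cap L(j(T),V_\kappa)$ has no clear meaning. In the paper the norms are indexed not by such $a$ but by \emph{nodes} $t\in j(\gamma)^{\mathord{<}\omega}$ of $j(T)$, and the definition is $\varphi_t(x)=\rank_{j(j(T))_x}(j(t))$: the rank is computed in a section of $j(j(T))$, not of $j(T)$. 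This double application of $j$ is precisely what makes the reduction to $\le\eta$ many norms work. One introduces auxiliary norms $\psi_t(x)=\rank_{j(T)_x}(t)$ and declares $t\sim t'$ when $\psi_t,\psi_{t'}$ induce the same prewellordering in every generic extension by a poset in $V_\kappa$; this relation is determined by a set $S_t\in V_{\kappa+1}\cap L(j(T),V_\kappa)$, so there are at most $\eta$ classes and one picks representatives $\sigma$. Elementarity of $j$ then transfers $t\sim t'$ to equivalence of $\varphi_t$ and $\varphi_{t'}$ on $A=(\omega^\omega\setminus p[j(j(T))])^{M[G][H]}$, because $G\ast H$ is $M$-generic for a poset of size below $j(\kappa)$. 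Working only with $\rank_{j(T)_x}$, as you do, you have no mechanism to lift a $V_\kappa$-level equivalence of norms up to the $\Col(\omega,\lambda)$-level where it is needed, and your ``reflection'' hand-wave does not supply one.

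A secondary point: you build $\tilde T$ in $V[G]$ from the semiscale restricted to $A^{V[G]}$ and then face the genuine difficulty of approximating an arbitrary $x\in A^{V[G][H]}$ by reals $z_n\in V[G]$ with prescribed norm values---the step you yourself flag as the crux. The paper sidesteps this entirely: it forms $\tilde T$ in $V[G][H]$ as the tree of the semiscale on the full $A$ (so that $p[\tilde T]=A$ is immediate there), and then observes that $\tilde T$ is definable from $j(j(T))\in V$ together with the sequence $(j(t_i):i<\omega)\in V[G]$ and is a subset of $V$, whence by almost homogeneity of $\Col(\omega,\lambda)$ one already has $\tilde T\in V[G]$.
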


Because $\kappa$ is inaccessible we have $|V_{\kappa+1} \cap L(j(T),V_\kappa)| \le 2^\kappa$ for every tree $T$,
so Lemma \ref{lem-strong-eta} with $\eta = 2^\kappa$ implies Lemma \ref{lem-strong-2-kappa}. To complete this section it remains to prove Lemma \ref{lem-strong-eta}.

\begin{remark}
 The proof will resemble a well-known construction which, given a countable tree representation $T$ of a $\bfSigma^1_1$ set, builds a semiscale $\vec{\varphi}$ on (and thereby an associated tree representation $\tilde{T}$ of) the complementary $\bfPi^1_1$ set. The main difference here is that in Lemma \ref{lem-strong-eta} we may not assume $T$ is countable; indeed, $T$ will have cardinality at least $\kappa$ in the desired applications. Therefore what we first get is only a proto-semiscale. To get a semiscale, we will need to collapse the proto-semiscale to be countable. If the tree itself is collapsed, then it's not clear this proto-semiscale will be useful; however, by using a given degree of strongness of $\kappa$, we may ``inflate'' $T$ to $j(T)$ and show that it suffices to collapse something less than the cardinality of $j(T)$. As noted previously, the difference between $T$ and $j(T)$ will not matter for applications within the projective hierarchy.
\end{remark}

\begin{proof}[Proof of Lemma \ref{lem-strong-eta}]
 Note that for every generic extension $V[g]$ of $V$ by a poset in $V_\kappa$, the map $j$ extends to an elementary embedding $V[g] \to M[g]$, giving $p[T]^{V[g]} = p[j(T)]^{M[g]}$. Of course, $p[j(T)]^{M[g]} = p[j(T)]^{V[g]}$. Let $H \subset \Col(\omega,\lambda)$ be a $V[G]$-generic filter. By the elementarity of $j$, the fact that $p[T]^{V[g]} = p[j(T)]^{V[g]}$ for every generic extension $V[g]$ of $V$ by a poset in $V_\kappa$, and the fact that $j(\kappa)>\lambda$, we have $p[j(T)]^{M[G][H]} = p[j(j(T))]^{M[G][H]}$. The models $M[G][H]$ and $V[G][H]$ have the same reals because $\powerset(\lambda)\subset M$, so
 \[ p[j(T)]^{V[G][H]} = p[j(T)]^{M[G][H]} = p[j(j(T))]^{M[G][H]} = p[j(j(T))]^{V[G][H]}.\]
 Let $A$ denote the complement of this common set of reals:
 \[A = (\omega^\omega \setminus p[j(T)])^{V[G][H]} = (\omega^\omega \setminus p[j(T)])^{M[G][H]} = \ldots.\]
 Take an ordinal $\gamma$ such that $T$ is a tree on $\omega \times \gamma$.
 Then for each $t \in j(\gamma)^{\mathord{<}\omega}$, we may define a norm $\varphi_t$ on $A$ by
 \[ \varphi_t(x) = \rank_{j(j(T))_x}(j(t)).\]
 Recall that this rank is defined as zero if $j(t) \notin j(j(T))_x$.

\begin{claim}\label{claim:proto-semiscale}
 The set of norms $\{\varphi_t : t \in j(\gamma)^{\mathord{<}\omega}\}$ is a proto-semiscale on $A$.
\end{claim}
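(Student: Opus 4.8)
The plan is to mimic the classical construction noted above: build a semiscale on a $\bfPi^1_1$ set out of the rank functions of the sections of a countable tree representation of its $\bfSigma^1_1$ complement, the only new wrinkle being the extra application of $j$. So, working in $V[G][H]$, let $(x_n : n < \omega)$ be a sequence of reals in $A$ converging to a real $y$, and suppose that for every $t \in j(\gamma)^{\mathord{<}\omega}$ the sequence of ordinals $(\varphi_t(x_n) : n < \omega)$ is eventually constant, with eventual value $\alpha_t$. We must show $y \in A$, so we assume toward a contradiction that $y \notin A$. Since $A = (\omega^\omega \setminus p[j(T)])^{V[G][H]}$, the section $j(T)_y$ is illfounded in $V[G][H]$, and we fix a branch $f \in j(\gamma)^\omega$ through it, i.e.\ $(y \restriction k, f \restriction k) \in j(T)$ for all $k<\omega$.

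The point where care is needed is that $f$ need not lie in $V$ (or $M$), so we cannot apply $j$ to it directly; this is exactly what the extra application of $j$ in the definition of the norms $\varphi_t$ (and in passing from $T$ to $j(T)$) is there to circumvent. The idea is to apply $j$ only to the \emph{finite} initial segments of $f$. For $k < \omega$ set $t_k = f \restriction k \in j(\gamma)^{\mathord{<}\omega}$. Being a finite sequence of ordinals below $j(\gamma)$, $t_k$ lies in $M$, and since membership in the fixed tree $j(T)$ is absolute, $(y \restriction k, t_k) \in j(T)$ holds in $V$; applying the elementary embedding $j : V \to M$, and using that $j$ fixes the hereditarily finite object $y \restriction k$, we get $(y \restriction k, j(t_k)) \in j(j(T))$, so $j(t_k) \in j(j(T))_y$. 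Also, from $t_{k+1} = t_k {}^\frown \langle f(k) \rangle$ we get $j(t_{k+1}) = j(t_k) {}^\frown \langle j(f(k)) \rangle$, so $j(t_{k+1})$ properly extends $j(t_k)$.

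The contradiction is then obtained exactly as in the classical case. Fix $k < \omega$. Since $x_n \to y$, there is $N$ with $x_n \restriction (k+1) = y \restriction (k+1)$ for all $n \ge N$, and for such $n$ it follows from the previous paragraph that $j(t_k), j(t_{k+1}) \in j(j(T))_{x_n}$. Moreover $x_n \in A = (\omega^\omega \setminus p[j(j(T))])^{V[G][H]}$, so $j(j(T))_{x_n}$ is wellfounded in $V[G][H]$ and $\rank_{j(j(T))_{x_n}}$ is defined; since $j(t_{k+1})$ properly extends $j(t_k)$ and lies in $j(j(T))_{x_n}$, we get
\[ \varphi_{t_k}(x_n) = \rank_{j(j(T))_{x_n}}(j(t_k)) > \rank_{j(j(T))_{x_n}}(j(t_{k+1})) = \varphi_{t_{k+1}}(x_n). \]
Choosing $n \ge N$ also large enough that $\varphi_{t_k}(x_n) = \alpha_{t_k}$ and $\varphi_{t_{k+1}}(x_n) = \alpha_{t_{k+1}}$ yields $\alpha_{t_k} > \alpha_{t_{k+1}}$. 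As $k$ was arbitrary, $(\alpha_{t_k} : k < \omega)$ is an infinite strictly decreasing sequence of ordinals, a contradiction; hence $y \in A$.

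The main obstacle, as indicated, is the one in the second paragraph: the branch witnessing $y \in p[j(T)]$ lives only in the generic extension $V[G][H]$, so the argument must be arranged so that $j$ is applied only to finite sequences of ground-model ordinals, which is possible because membership in the fixed trees $j(T)$ and $j(j(T))$ is absolute. Everything else is routine bookkeeping of the classical rank-function argument, relying only on facts already established in this proof (that $j(T)$ and $j(j(T))$ have the same projection in $V[G][H]$, with complement $A$) and the absoluteness of wellfoundedness.
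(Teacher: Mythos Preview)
Your argument is correct and follows essentially the same route as the paper's proof: assume $y\notin A$, take a branch $f$ through $j(T)_y$, apply $j$ to its finite initial segments to get nodes $j(f\restriction k)\in j(j(T))_y$, and derive a strictly decreasing sequence of eventual norm values. The only difference is that you spell out in more detail why applying $j$ to $f\restriction k$ is legitimate (a minor quibble: the relevant point is that $t_k\in V$, the domain of $j$, rather than $t_k\in M$).
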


\begin{proof}
 Let $(x_n : n < \omega)$ be a sequence of reals in $A$ and let $y$ be a real in $V[G][H]$. Assume that $(x_n:n<\omega)$ converges to $y$ and for every  $t \in j(\gamma)^{\mathord{<}\omega}$ the sequence of ordinals $(\varphi_t(x_n): n<\omega)$ is   eventually constant; let $\lambda_t$ denote its eventual value.  We want to show $y \in A$. Suppose toward a contradiction that $y \in p[j(T)]$ as witnessed by $f \in j(\gamma)^\omega$. That is, for all $i < \omega$ we have $f \restriction i \in j(T)_y$.  Then we have $j(f \restriction i) \in j(j(T))_y$, and because $x_n \to y$ as $n \to \omega$, we have $j(f \restriction i) \in j(j(T))_{x_n}$ for all sufficiently large $n$.  Therefore for all sufficently large $n$ the norm values $\varphi_{f \restriction i}(x_n)$ and $\varphi_{f \restriction (i+1)}(x_n)$ are the ranks of a parent and child node respectively in the well-founded tree $j(j(T))_{x_n}$, so the eventual norm values satisfy the inequality $\lambda_{f \restriction i} > \lambda_{f \restriction (i+1)}$. This inequality holds for all $i < \omega$, so we get an infinite decreasing sequence of ordinals, which is a contradiction.
\end{proof}
 
We can strengthen Claim \ref{claim:proto-semiscale} as follows:

\begin{claim}
 There is a set $\sigma \subset j(\gamma)^{\mathord{<}\omega}$ in $V$ such that $|\sigma|^V \le \eta$ and 
 the set of norms $\{\varphi_t : t \in \sigma\}$ is a proto-semiscale on $A$.
\end{claim}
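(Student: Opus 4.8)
The plan is to obtain $\sigma$ by thinning the index set $j(\gamma)^{\mathord{<}\omega}$ via a reflection argument and then re-running the proof of Claim~\ref{claim:proto-semiscale} with the smaller family. First I would isolate exactly what that proof uses: if $(x_n:n<\omega)$ converges to $y$ with each $x_n\in A$ and each $\varphi_t$ ($t\in\sigma$) eventually constant with value $\lambda_t$, and if $y\in p[j(T)]$ is witnessed by some $f\in j(\gamma)^\omega$, then for all $i<i'$ with $f\restriction i,\,f\restriction i'\in\sigma$ one has $\lambda_{f\restriction i}>\lambda_{f\restriction i'}$, because $j(f\restriction i)$ is a proper initial segment of $j(f\restriction i')$ in the wellfounded tree $j(j(T))_{x_n}$ for all large $n$. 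Hence $\{\varphi_t:t\in\sigma\}$ is already a proto-semiscale on $A$ as soon as $\sigma$ has the property $(\dagger)$: for every $y\in p[j(T)]$ in $V[G][H]$ there is a branch $f$ of $j(T)_y$ with $f\restriction i\in\sigma$ for infinitely many $i$.

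Next I would use $\crit(j)=\kappa$ to check that $(\dagger)$ concerns the branches the descent argument really sees. Because $j$ fixes $\omega$ and every ordinal, the subtree of $j(j(T))$ consisting of the nodes whose ordinal part is a tuple of ordinals in $\ran(j)$ is, via $j$, an isomorphic copy of $j(T)$, and therefore projects in every model to $p[j(T)]$; since $p[j(T)]=p[j(j(T))]$ in $V[G][H]$ — an equality already established in this proof — every $y\in p[j(T)]^{V[G][H]}$ admits a branch of $j(j(T))_y$ lying entirely in $\ran(j)$, namely $(j(f(i)):i<\omega)$ for a branch $f$ of $j(T)_y$, which is exactly the branch tracked above. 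So what must be arranged is a set $\sigma\subset j(\gamma)^{\mathord{<}\omega}$ of size $\le\eta$ catching, for each such $y$, infinitely many initial segments of some branch of $j(T)_y$.

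The hypothesis on $L(j(T),V_\kappa)$ enters at the point of producing such a $\sigma$. Writing $W=L(j(T),V_\kappa)$ — an inner model containing $j(T)$, with all the ordinals, and with at most $\eta$ subsets of $V_\kappa$ — I would fix in $W$ a well-ordering of a large rank-initial-segment, form an elementary submodel $X\prec(V_\theta^W;\in,<_W)$ of size $|V_{\kappa+1}\cap W|\le\eta$ with $V_\kappa\subseteq X$ and $j(T),j(\gamma)\in X$, and let $\sigma\in W$ be a spaced-out subset of $X\cap j(\gamma)^{\mathord{<}\omega}$. Taking all of $X\cap j(\gamma)^{\mathord{<}\omega}$ would not do, since that set is closed under initial segments, so $j(T)$ restricted to it is a countable tree in $V[G]$ and $(\dagger)$ would force $p[j(T)]$ to be analytic in $V[G][H]$, which in general it is not; instead $\sigma$ should retain, through each branch of $j(T)$ recognized by $X$, infinitely but not cofinitely many of its initial segments, spaced so as to defeat that closure. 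That $\sigma$ actually has $(\dagger)$ in $V[G][H]$ I would verify by a transfer argument: $(\dagger)$ can be recast, uniformly in a ``kill point'' and a finite partial node, as the wellfoundedness of auxiliary trees built from $j(T)$ and $\sigma$ inside $W$, and wellfoundedness of a tree is upward absolute from $W$ to $V[G][H]$.

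The main obstacle is precisely this last step, namely balancing the two opposing demands on $\sigma$: it must be spread out enough that $j(T)$ restricted to it is not a genuine subtree (otherwise $p[j(T)]$ becomes analytic in $V[G][H]$, contrary to its intended complexity), yet thin enough to inject into $V_{\kappa+1}\cap L(j(T),V_\kappa)$, and the catching property $(\dagger)$ must survive the passage to the extension $V[G][H]$ where $A$ actually lives. Exhibiting a size-$\eta$ ``barrier'' inside $j(T)$ that some branch over every real crosses infinitely often — despite $j(\gamma)$ being far larger than $\eta$ — is the technical heart of the matter, and is exactly what ``inflating'' $T$ to $j(T)$ using the strongness of $\kappa$ is designed to make possible.
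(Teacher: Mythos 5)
Your reduction of the claim to the ``catching'' property $(\dagger)$ is a correct sufficient condition, but $(\dagger)$ is unattainable, so the construction cannot be completed along these lines. Suppose $\sigma \subset j(\gamma)^{<\omega}$ has cardinality at most $\eta$ and satisfies $(\dagger)$, and let $D$ be the set of ordinals occurring as entries of elements of $\sigma$, so $|D| \le \eta$. Any branch $f$ of $j(T)_y$ with $f \restriction i \in \sigma$ for infinitely many $i$ takes all of its values in $D$, so $(\dagger)$ forces $p[j(T)] = p[j(T) \cap (\omega \times D)^{<\omega}]$ in $V[G][H]$; that is, $p[j(T)]$ is the projection of a tree of cardinality at most $\eta$. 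Since $\eta$ is countable in $V[G]$, this would make $p[j(T)]$ a $\bfSigma^1_1$ set in $V[G][H]$, which is false in the intended applications (where $p[j(T)]$ is a universal $\Sigma^1_{i+2}$ set in that model). Your ``spacing out'' of $\sigma$ does not evade this objection: the obstruction is the set of \emph{ordinals} appearing in $\sigma$, not closure of $\sigma$ under initial segments. You yourself flag the construction of such a ``barrier'' as the unresolved technical heart of the argument, so the proposal is missing its main step, and no choice of $\sigma$ can supply it.

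The paper's proof never tries to catch branches. It instead exploits the fact (noted after Definition \ref{def:proto-semiscale}) that the proto-semiscale property is invariant under replacing each norm by an equivalent norm, together with the observation that the map $t \mapsto \varphi_t$ has at most $\eta$ values up to equivalence. Concretely, one works with the unshifted norms $\psi_t(x) = \rank_{j(T)_x}(t)$ and codes the prewellordering of $\psi_t$ across all generic extensions by posets in $V_\kappa$ by the set $S_t = \{(\mathbb{P},p,\dot{x},\dot{y}) \in V_\kappa : p \forces_{\mathbb{P}} \dot{x} \le_{\psi_t} \dot{y}\}$, which lies in $V_{\kappa+1} \cap L(j(T),V_\kappa)$; hence there are at most $\eta$ equivalence classes. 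Choosing $\sigma$ to meet every class and pushing the equivalences forward by $j$ (using $j(\kappa) > \lambda$) shows that every $\varphi_t$ is equivalent to some $\varphi_{t'}$ with $t' \in \sigma$, and Claim \ref{claim:proto-semiscale} then transfers to $\{\varphi_t : t \in \sigma\}$. Note that this $\sigma$ need not contain any initial segment of any branch witnessing membership in $p[j(T)]$ --- exactly the freedom your approach gives up.
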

\begin{proof}
 For every $t \in j(\gamma)^{\mathord{<}\omega}$ we may define, in any model containing the tree $j(T)$, a norm $\psi_t$ on the set of reals $\omega^\omega \setminus p[j(T)]$ in that model by $\psi_t(x) = \rank_{j(T)_x}(t)$. Note that $\psi_t$ is defined like $\varphi_t$ but with one fewer level of applications of $j$. Define an equivalence relation $\sim$ on $j(\gamma)^{\mathord{<}\omega}$ by $t \sim t'$ if and only if in every generic extension of $V$ by a poset in $V_\kappa$ the norms $\psi_t$ and $\psi_{t'}$ are equivalent (meaning their corresponding prewellorderings $\le_{\psi_t}$ and $\le_{\psi_{t'}}$ are equal.) Note that $t \sim t'$ if and only if $S_t = S_{t'}$, where for every $t \in j(\gamma)^{\mathord{<}\omega}$ we define the set
 \[S_t =  \{(\mathbb{P}, p, \dot{x}, \dot{y}) \in V_\kappa : p \forces_{\mathbb{P}} \dot{x} \le_{\psi_t} \dot{y} \}.\]
 Because $S_t \in V_{\kappa+1} \cap L(j(T), V_\kappa)$ for all $t \in j(\gamma)^{\mathord{<}\omega}$, the number of equivalence classes of $\sim$ is at most $\eta$, so we may take a set $\sigma \subset j(\gamma)^{\mathord{<}\omega}$ in $V$ such that $|\sigma|^V \le \eta$ and for every $t \in j(\gamma)^{\mathord{<}\omega}$ there is some $t' \in \sigma$ with $t \sim t'$. Note that if $t \sim t'$, then from the elementarity of $j$ and the fact that $j(\kappa) > \lambda$ it follows that the two norms $\varphi_{t}$ and $\varphi_{t'}$ on the set of reals $(\omega^\omega \setminus p[j(j(T))])^{M[G][H]}$ (which is equal to $A$) are equivalent. Therefore every norm in the set $\{\varphi_t : t \in j(\gamma)^{\mathord{<}\omega}\}$ is equivalent to a norm in the set $\{\varphi_t : t \in \sigma\}$. Because replacing every norm by an equivalent norm preserves the defining property of proto-semiscales, the claim now follows from Claim \ref{claim:proto-semiscale}.
\end{proof}
 
 Fixing an enumeration $(t_i : i <\omega)$ of $\sigma$ in $V[G]$, the corresponding sequence of norms $\vec{\varphi} = (\varphi_{t_i} : i < \omega)$ is therefore a semiscale on $A$. Let $\tilde{T}$ be its associated tree. In $V[G][H]$ the tree $\tilde{T}$ is definable from the semiscale $\vec{\varphi}$, which in turn is definable from the tree $j(j(T)) \in V$ and the sequence of nodes $(j(t_i) : i < \omega) \in V[G]$. Because $\tilde{T}$ is a subset of $V[G]$ (in fact of $V$) and the poset $\Col(\omega,\lambda)$ is almost homogeneous, we therefore have $\tilde{T} \in V[G]$. Because $\tilde{T}$ projects to the set $A = \omega^\omega \setminus p[j(T)]$ in $V[G][H]$, it is a $\Col(\omega,\lambda)$-absolute complement for $j(T)$ in $V[G]$.
\end{proof}

\section{Proof of Theorem \ref{thm:sigma-1-4}}\label{sec:proof-of-thm-sigma-1-4}

In this section we will need to use parts of the proof of $\Sigma^1_3$ correctness of the core model in Steel \cite[Section 7D]{SteCMIP}. In particular we will need the following definition and lemma regarding a version of the Martin--Solovay tree representations of $\Sigma^1_3$ sets.

\begin{definition}
 Let $\kappa$ be an inaccessible cardinal such that every set in $V_\kappa$ has a sharp.
 \begin{enumerate}
  \item For every ordinal $\alpha \ge 1$, let $u_\alpha$ be the $\alpha^\text{th}$ uniform indiscernible for the models $L[x]$ where $x \in V_\kappa$.
  \item For every $\Sigma^1_3$ property $A$, let $T_A$ denote the Martin--Solovay tree representation
   of $A$ as it is defined in Steel \cite[Section 7D]{SteCMIP}, which is a tree on $\omega \times u_\omega$.\footnote{Steel defines the Martin--Solovay tree representations of $\Pi^1_2$ sets instead of $\Sigma^1_3$ sets, but we may use the fact that tree representations of $\Sigma^1_{n+1}$ sets are definable from tree representations of $\Pi^1_n$ sets in a local and uniform way.}
 \end{enumerate}
\end{definition}

Note that $u_1 = \kappa$ and $u_2 \le \kappa^+$. The following lemma is an adaptation of a lemma that was implicit in M.~Magidor's simplified proof of the $\Sigma^1_3$ correctness of the Dodd--Jensen core model (see Hjorth \cite[Theorem 2.1]{Hjou2}.)  Instead of sharps for reals, we use sharps for elements of $V_\kappa$ as in Steel \cite[Lemma 7.10]{SteCMIP}. What we state here is just a more abstract consequence of the argument given by Steel; the proof is identical, so we will not repeat it.

\begin{lemma}[essentially Magidor; see Steel {\cite[Lemma 7.10]{SteCMIP}}]\label{lem:magidor}
 Let $\kappa$ be an inaccessible cardinal such that every set in $V_\kappa$ has a sharp.  Let $M$ be a transitive model of ZFC (either a set or proper class in $V$) such that $\kappa \in M$ and $M$ satisfies ``every set in $V_\kappa$ has a sharp.'' If $u_2^M = u_2$ then $u_\omega^M = u_\omega$ and $T_A^M = T_A$ for every $\Sigma^1_3$ property $A$.
\end{lemma}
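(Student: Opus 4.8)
The plan is to reduce the lemma to the single assertion that $u_n^M = u_n$ for every finite $n$, and then to derive that assertion by induction on $n$ from its case $n = 2$, which is the hypothesis. For the reduction, the first point is that sharps are canonical: if a set $x$ lies in both $M$ and $V$ and each of these models satisfies ``$x^\#$ exists'', then $(x^\#)^M = (x^\#)^V$, because $x^\#$ may be identified with the unique closed unbounded class of Silver indiscernibles for $L[x]$ (equivalently, with the theory it realizes), an identification that is absolute between transitive models of $\mathrm{ZF}$ containing the relevant objects. In particular, for each $x$ that both $M$ and $V$ place in $V_\kappa$, the class of Silver indiscernibles of $L[x]$ is computed identically in $M$ and in $V$. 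The second point is to recall the structure of the Martin--Solovay tree $T_A$ as defined in Steel \cite[Section 7D]{SteCMIP}: for a $\Pi^1_2$ property (and hence, by the reduction mentioned in the footnote, also for a $\Sigma^1_3$ property) $A$, a node of $T_A$ at level $n$ records, for the first $n$ ``candidate witnesses'' in a fixed enumeration, finite approximations to ordinal rank functions witnessing the associated $\Pi^1_1$ conditions, and these approximations are obtained by evaluating Skolem terms of the appropriate models $L[x]$ at finite increasing tuples of uniform indiscernibles below $u_\omega$. Thus $T_A$ is defined, uniformly in $A$, from the sequence $\langle u_n : n < \omega \rangle$ together with the sharps $\{ x^\# : x \in V_\kappa \}$. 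Consequently, once we know that $u_n^M = u_n$ for all $n$ --- whence also $u_\omega^M = \sup_{n < \omega} u_n^M = u_\omega$, the class of uniform indiscernibles being closed --- the equality $T_A^M = T_A$ for every $\Sigma^1_3$ property $A$ follows by unwinding this definition and using the agreement of the sharps.

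It therefore remains to prove $u_n^M = u_n$ by induction on $n < \omega$. For $n = 1$ we have $u_1^M = \kappa = u_1$: here one uses that $\kappa$ is inaccessible in $M$, so that $\kappa$ is a common Silver indiscernible of the models $L[x]$ with $x \in V_\kappa$, and that no smaller ordinal is such a common indiscernible (by the agreement of the relevant indiscernible classes with those of $V$, where $u_1 = \kappa$). For $n = 2$ the statement is exactly the hypothesis $u_2^M = u_2$. For the step from $n$ to $n+1$ with $n \ge 2$, one invokes the analysis of uniform indiscernibles due to Solovay and Kunen (see Kanamori \cite{KanHigherInfinite}): $u_{n+1}$ is the least common Silver indiscernible of the $L[x]$, $x \in V_\kappa$, lying above $u_n$, and it admits a presentation referring to nothing beyond $u_n$ and the sharps --- for instance, $u_{n+1}$ is the supremum of the ordinals $h_x(u_{i_1}, \dots, u_{i_k})$ as $x$ ranges over the sets in $V_\kappa$, $h_x$ over the Skolem terms of $L[x]$, and $u_{i_1} < \dots < u_{i_k}$ over uniform indiscernibles $\le u_n$. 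By the induction hypothesis $M$ and $V$ agree about $u_j$ for every $j \le n$, and they agree about the sharps, so this presentation yields $u_{n+1}^M = u_{n+1}$, completing the induction and the proof.

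The step I expect to be the genuine obstacle --- and which carries essentially all the content of the lemma --- is the inductive step just sketched: one must verify that the recursion producing $u_{n+1}$ from $u_n$ and the sharps is evaluated identically in $M$ and in $V$, i.e., that above $u_2$ the uniform indiscernibles, and with them the tree $T_A$, are insensitive to any disagreement between $M$ and $V$ about the contents of $V_\kappa$ or about $\kappa^+$. This is precisely why the hypothesis is placed at level $2$ and no lower: the passage from $u_1 = \kappa$ to $u_2$ does depend on such features of the ambient universe --- in Steel's intended application, with $M$ the core model $K$, it is the weak covering property $(\kappa^+)^K = \kappa^+$ that forces $u_2^K = u_2$ --- while each later passage $u_n \mapsto u_{n+1}$ merely repackages information already present in $u_2$ and the sharps. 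As the verification of the inductive step is identical to the argument of Steel \cite[Lemma 7.10]{SteCMIP} (which itself follows Magidor's treatment of the Dodd--Jensen core model), I would at that point simply carry that argument over, having first checked that it appeals to no hypotheses beyond the ones stated here.
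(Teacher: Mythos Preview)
The paper does not give its own proof of this lemma; it simply states that the argument is identical to Steel \cite[Lemma~7.10]{SteCMIP} (Magidor's argument) and declines to repeat it. Your proposal is a faithful outline of exactly that argument---reduce to $u_n^M=u_n$ for all $n$, use absoluteness of sharps for $x\in V_\kappa^M$, and run the Solovay--Kunen recursion $u_{n+1}=\sup_{x,\tau}\tau^{L[x]}(u_1,\dots,u_n)$ by induction from the base case $u_2^M=u_2$---so you are taking the same route the paper points to, only with more detail than the paper itself provides.

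One point worth flagging, which you touch on but do not resolve: in the recursion the supremum in $M$ ranges over $x\in V_\kappa^M$, while in $V$ it ranges over the possibly larger $V_\kappa$, so agreement on $u_1,\dots,u_n$ and on sharps does not by itself give $u_{n+1}^M=u_{n+1}$; one needs that every ordinal below $u_{n+1}$ already has a representation $\tau^{L[x]}(u_1,\dots,u_n)$ with $x\in V_\kappa^M$. You correctly identify this as the real content and defer to Steel for it, which matches the paper's treatment.
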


We may now proceed with the proof of Theorem \ref{thm:sigma-1-4}. Let $\kappa$ be a strong cardinal. We want to show that two-step $\bfSigma^1_4$ generic absoluteness holds after forcing with $\Col(\omega,\kappa^+)$. By a standard argument involving the absoluteness of wellfoundedness, a sufficient condition for two-step $\bfSigma^1_4$ generic absoluteness to hold is:
\begin{itemize}
 \item[($\ast$)] For every $\Pi^1_3$ property $B$ and every uncountable cardinal $\lambda$ there is a tree $\tilde{T}$ on $\omega \times \Ord$ such that $\emptyset \forces_{\Col(\omega,\lambda)} p[\tilde{T}] = \{x \in \omega^\omega : B(x)\}$.
\end{itemize}
In order to show that condition ($\ast$) holds after forcing with $\Col(\omega,\kappa^+)$, we will need to consider two cases.

The easy case is where $\bfDelta^1_2$ determinacy holds in every generic extension of $V$. In this case we will show that condition ($\ast$) holds already in $V$ (and therefore two-step $\bfSigma^1_4$ generic absoluteness holds already in $V$.) Let $B$ be a $\Pi^1_3$ property, let $\lambda$ be an infinite cardinal, and let $H \subset \Col(\omega,\lambda)$ be a $V$-generic filter. Work in $V[H]$. By $\bfDelta^1_2$ determinacy and the second periodicity theorem of Moschovakis \cite[Corollary 6C.4]{MosDST2009} applied to the pointclass $\Sigma^1_2$, the pointclass $\Pi^1_3$ has the scale property. In particular the $\Pi^1_3$ set $\{x \in \omega^\omega : B(x)\}$ has a definable semiscale and is therefore the projection of a definable tree $\tilde{T}$ on $\omega \times \Ord$.  Because the poset $\Col(\omega,\lambda)$ is almost homogeneous we have $\tilde{T} \in V$, witnessing condition ($\ast$) for $B$ and $\lambda$ in $V$.

The other case is where $\bfDelta^1_2$ determinacy fails in some generic extension of $V$. In this case we will need to collapse $\kappa^+$ and apply Lemmas \ref{lem:magidor} and \ref{lem-strong-eta} to the Martin--Solovay trees. Because $V_\kappa \prec_{\Sigma_2} V$ it follows in this case that $\bfDelta^1_2$ determinacy fails in some generic extension of $V$ by a poset in $V_\kappa$. Because we are ultimately interested in $V^{\Col(\omega,\kappa^+)}$, by passing to such a small generic extension we may assume without loss of generality that $\bfDelta^1_2$ determinacy fails already in $V$.

Fix a real $z_0 \in V$ such that $\Delta^1_2(z_0)$ determinacy fails in $V$. Because every set has a sharp,
we have two-step $\bfSigma^1_3$ generic absoluteness and in particular $\Sigma^1_3(z_0)$ generic absoluteness.
Therefore if we take two $\Sigma^1_2(z_0)$ properties that are complementary in $V$ and define a failure of $\Delta^1_2(z_0)$ determinacy in $V$, they remain complementary in every generic extension of $V$ and define a failure of $\Delta^1_2(z_0)$ determinacy there also.  From the failure of $\Delta^1_2(z_0)$ determinacy in every generic extension of $V$, it follows by a theorem of Woodin (see Neeman \cite[Corollary 2.3]{NeeOptimal}, which relativizes to an arbitrary real) that no generic extension of $V$ has a proper class inner model containing $z_0$ with a Woodin cardinal.

Let $B$ be a $\Pi^1_3$ property, say $B(x) \Leftrightarrow \neg A(x)$ where $A$ is $\Sigma^1_3$, and
let $T_A = T_A^V$ be the Martin--Solovay tree representation of $A$.

\begin{claim}\label{claim:trees}
 For every generic extension $V[g]$ of $V$ by a poset in $V_\kappa$ and every real $z \in V[g]$ such that $z_0 \in L[z]$, we have $V[g] \models |\omega^\omega \cap L[T_A,z]| \le \omega_1$.
\end{claim}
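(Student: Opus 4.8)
The plan is to argue that, from the standpoint of $V[g]$, the model $L[T_A,z]$ is close enough to a core model to obey a CH-type bound on its reals. Because the Martin--Solovay construction is local and uniform, $T_A$ is recursive in the Martin--Solovay tree $T_U$ of a fixed universal $\Sigma^1_3$ set $U$, so $L[T_A,z] \subseteq L[T_U,z]$ and it is enough to bound the reals of $N := L[T_U,z]$. Note that $N$ satisfies ``every set in $V_\kappa$ has a sharp'' (these sharps being decodable from $T_U$). Since $z_0 \in L[z] \subseteq N$, every proper class inner model of $N$ that contains $z$ also contains $z_0$; as $N$ is an inner model of $V[g]$ and, by the fact established above, no proper class inner model of $V[g]$ containing $z_0$ has a Woodin cardinal, no proper class inner model of $N$ containing $z$ does either. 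Hence, by core model theory below one Woodin cardinal, the core model relativized to the real $z$ exists inside $N$; write $K := K(z)^N$. It is fully iterable, $\Sigma^1_3(z)$-correct, and satisfies the GCH.

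The crux is to show $T_U \in K$, from which $N = L[T_U,z] \subseteq K \subseteq N$ and hence $N = K$. Here I would use the analysis behind Steel's proof of the $\Sigma^1_3$-correctness of the core model: when the core model exists it correctly computes the uniform indiscernibles $u_\alpha$ for $\alpha \le \omega$, and the Martin--Solovay tree of a $\Sigma^1_3$ property is definable over it. Applying Lemma \ref{lem:magidor} with $M := N$ (and the property $U$) --- after checking $u_2^N = u_2$, which is pinned down by $K$ via weak covering --- we obtain $T_U^N = T_U$; that is, $N$'s Martin--Solovay tree for $U$ agrees with $V$'s. Since $T_U^N$ is definable over the core model $K$ of $N$, it lies in $K$, as desired.

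It then only remains to note that $K \models \mathrm{GCH}$, so $|\omega^\omega \cap N|^N = \omega_1^N$; since $N \subseteq V[g]$, every ordinal uncountable in $V[g]$ is uncountable in $N$, so $\omega_1^N \le \omega_1^{V[g]}$, and hence $V[g] \models |\omega^\omega \cap N| \le \omega_1$, so a fortiori $V[g] \models |\omega^\omega \cap L[T_A,z]| \le \omega_1$. (It would have been enough to show $\omega^\omega \cap N \subseteq K$, but the argument above delivers $N = K$ outright.) I expect the main obstacle to be the step $T_U \in K$ --- that the Martin--Solovay tree adds no reals over the core model. This rests on reconciling the computations of $u_2$, and hence of the entire Martin--Solovay apparatus, in $V$ and in $N$: one must verify that $N$ genuinely carries the sharps needed to invoke Lemma \ref{lem:magidor}, and that $u_2^N = u_2$, the latter via weak covering for $K$. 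This is also the second point at which the hypothesis that no generic extension of $V$ has a proper class inner model containing $z_0$ with a Woodin cardinal is used --- to guarantee that $K(z)^N$ is genuinely correct and that $N$ cannot diverge from $V$, for instance by absorbing a real coding a long iteration of the core model.
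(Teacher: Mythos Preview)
Your overall strategy---show that $T_A$ lies in a suitable core model $K(z)$ and then use GCH there---is exactly the paper's strategy, and you correctly identify the crux as establishing $u_2$-agreement so that Lemma~\ref{lem:magidor} applies. The gap is in \emph{where} you build the core model. You set $N = L[T_U,z]$ and try to run core model theory inside $N$, but this model is too thin for the argument to go through as written.

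First, the step ``$u_2^N = u_2$, which is pinned down by $K$ via weak covering'' does not follow. Weak covering (or Steel's argument in \cite[Theorem 7.9]{SteCMIP}) relates $u_2^{K(z)^N}$ to $u_2^N$; it says nothing about the relationship between $u_2^N$ and $u_2^{V[g]}$. The uniform indiscernibles $u_\alpha$ are computed relative to $V_\kappa$, and since $V_\kappa^N$ may be a proper subclass of $V_\kappa^{V[g]}$, there is no reason for $u_2^N$ to equal $u_2^{V[g]}$. Second, Steel's core model theory from \cite{SteCMIP} (which is what Lemma~\ref{lem:magidor} and \cite[Theorem 7.9]{SteCMIP} are tied to) requires a measurable cardinal in the ambient model, and $L[T_U,z]$ need not have one. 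Third, your claim that $N$ satisfies ``every set in $V_\kappa$ has a sharp'' because sharps are ``decodable from $T_U$'' is not justified; the Martin--Solovay tree encodes information derived from sharps, but it is not clear that $x^\#$ actually lies in $L[T_U,z]$ for every $x \in V_\kappa^N$.

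The paper's proof avoids all three problems by exploiting the strongness of $\kappa$ (which you never use) to produce a transitive class $N$ with $V_\kappa \subset N$ in which $\kappa$ is measurable and there is a further measurable $\Omega > \kappa$. One then builds $K(z)$ inside $N[g]$: the measurable $\Omega$ makes the \cite{SteCMIP} machinery available, the measurability of $\kappa$ gives $u_2^{K(z)} = u_2^{N[g]}$ via \cite[Theorem 7.9]{SteCMIP}, and the inclusion $V_\kappa \subset N$ gives $u_2^{N[g]} = u_2^{V[g]}$ immediately. Lemma~\ref{lem:magidor} then yields $T_A \in K(z)$ and CH in $K(z)$ finishes.
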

\begin{proof}
 Take an inner model $N$ of ZFC such that $V_\kappa \subset N$, $\kappa$ is measurable in $N$, and $N$ has a measurable cardinal $\Omega > \kappa$. (Because $\kappa$ is strong, we may take an elementary embedding $j : V \to N$ with critical point $\kappa$ where $V_{\kappa +2} \subset N$, which implies the desired properties for $N$.) Let $V[g]$ be a generic extension of $V$ by a poset in $V_\kappa$. Note that $\kappa$ and $\Omega$ remain measurable cardinals in $N[g]$.
 
 Let $z$ be a real in $V[g]$ such that $z_0 \in L[z]$ and let $K^c(z)$ be the background certified core model built over $z$ in $N[g]$ up to the measurable cardinal $\Omega$. The model $K^c(z)$ has no Woodin cardinal: otherwise we could iterate a measure on $\Omega$ through the ordinals to obtain a proper class inner model containing $z$ (and therefore also containing $z_0$) with a Woodin cardinal, contradicting our assumption. Therefore the core model $K(z)$ over $z$ in $N[g]$ up to $\Omega$ exists. By the measurability of $\kappa$ in $N[g]$ and the proof of Steel \cite[Theorem 7.9]{SteCMIP} we have $u_2^{K(z)} = u_2^{N[g]}$. Moreover, it is easy to see that $u_2^{N[g]} = u_2^{V[g]} = u_2^V$, so by Lemma \ref{lem:magidor} we have
 \[T_A^{K(z)} = T_A^{N[g]} = T_A^{V[g]} = T_A.\]
 Therefore $T_A \in K(z)$, so $L[T_A,z] \subset K(z) \subset V[g]$ and the claim follows by CH in $K(z)$.
\end{proof}

Let $\lambda > \kappa^+$ be a cardinal. Because $\kappa$ is strong, we may take an elementary embedding $j: V \to M$ such that $\crit(j) = \kappa$ and $j(\kappa) > \lambda$ and $M$ is a transitive class with $\powerset(\lambda) \subset M$.

\begin{claim}\label{claim:trees2}
 $|V_{\kappa+1} \cap L(j(T_A),V_\kappa)| \le \kappa^+$.
\end{claim}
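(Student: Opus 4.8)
The plan is to reduce the statement to a counting argument over a constructibility-type inner model built from $V_\kappa$ together with an object of transitive size at most $\kappa$. I would first record the standard fact that for any $A\subseteq V_\kappa$ with $|\mathrm{trcl}(A)|\le\kappa$ one has $|V_{\kappa+1}\cap L(A,V_\kappa)|\le\kappa^+$: given $X\subseteq V_\kappa$ in $L(A,V_\kappa)=L[A,V_\kappa]$, say $X\in L_\beta(A,V_\kappa)$, take $Y\prec L_\beta(A,V_\kappa)$ with $A\cup V_\kappa\subseteq Y$ and $|Y|=\kappa$ and collapse; by condensation for the $L(A,V_\kappa)$‑hierarchy the collapse has the form $L_{\bar\beta}(A,V_\kappa)$ with $\bar\beta<\kappa^+$, and it fixes $V_\kappa$ and hence $X$, so $X\in L_{\kappa^+}(A,V_\kappa)$, a set of size $\kappa^+$. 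Thus it suffices to find such an $A$ with $L(j(T_A),V_\kappa)\subseteq L(A,V_\kappa)$. I would take $A$ to be the sharp function $s\colon x\mapsto x^\sharp$ on $V_\kappa$: every sharp of a member of $V_\kappa$ again lies in $V_\kappa$, so $s\in V_{\kappa+1}$ and $|\mathrm{trcl}(s)|\le\kappa$.

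The heart of the matter is therefore $j(T_A)\in L(s,V_\kappa)$. By the elementarity of $j$, $j(T_A)$ is the Martin–Solovay tree of $A$ as computed inside $M$ relative to the scaffold $j(\kappa)$. I would apply Lemma~\ref{lem:magidor} to $M$: it is a transitive model of ZFC with $\kappa\in M$; since $\kappa<\lambda$ and $\powerset(\lambda)\subseteq M$ we have $V_\kappa\subseteq M$ and all relevant sharps in $M$, so $M\models$ ``every set in $V_\kappa$ has a sharp''; and $\powerset(\kappa)\subseteq M$ forces $(\kappa^+)^M=(\kappa^+)^V$, while the models $L[x]$ for $x\in V_\kappa$ and their sharps are absolute between $M$ and $V$, whence $u_2^M=u_2$. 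Lemma~\ref{lem:magidor} then says that the entire Martin–Solovay apparatus attached to $V_\kappa$ — the uniform indiscernibles through $u_\omega$, the canonical functions, the norms — is computed identically in $M$ and in $V$, and in particular (using that, given sharps, a Martin–Solovay tree is pinned down by $u_2$) that $j(T_A)$ is produced from exactly the data $s$ and $V_\kappa$ together with the ordinal $j(\kappa)$ by the canonical, choice‑free Martin–Solovay recipe. Since $s,V_\kappa,j(\kappa)\in L(s,V_\kappa)$ and that recipe is definable, $j(T_A)\in L(s,V_\kappa)$, and the claim follows from the first paragraph.

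I expect this last step to be the main obstacle. As a set, $j(T_A)$ has cardinality $|j(\kappa)|$ — far above $\kappa^+$ — and it involves ordinals reaching up to $j(u_\omega)$; one must argue that this ``large'' part is purely combinatorial bookkeeping, reconstructible inside $L(s,V_\kappa)$ from the single ordinal $j(\kappa)$ and the absolute $V_\kappa$‑level sharp data, and in particular that $j(T_A)$ carries no sharp information about members of $V_{j(\kappa)}^M\setminus V_\kappa$. This is precisely the robustness that Lemma~\ref{lem:magidor} is meant to furnish — that the Martin–Solovay tree of a $\Sigma^1_3$ property depends on the ambient universe only through $u_2$ — and checking it carefully against the construction of \cite[Section~7D]{SteCMIP} is where the real work lies.
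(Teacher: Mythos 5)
Your opening reduction (the condensation bound $|V_{\kappa+1}\cap L(A,V_\kappa)|\le\kappa^+$ for $A$ of transitive size at most $\kappa$) is fine, but the step it hinges on --- $j(T_A)\in L(s,V_\kappa)$ where $s$ is the sharp function on $V_\kappa$ --- is exactly the part that is not established, and the route you propose cannot deliver it. Lemma \ref{lem:magidor} compares the Martin--Solovay tree \emph{relative to the same $\kappa$} as computed in two models with the same $u_2$; applying it to $M$ only gives $T_A^M=T_A$. But $j(T_A)$ is, by elementarity, the Martin--Solovay tree of $M$ relative to $j(\kappa)$: its first uniform indiscernible is $j(\kappa)$, its ``$u_2$'' is $j(u_2)$, and its nodes involve ordinals up to $j(u_\omega)$, all built from the sharps of arbitrary elements of $V_{j(\kappa)}^M$, not just of elements of $V_\kappa$. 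There is no reason this object should be reconstructible from $V_\kappa$, the sharp function on $V_\kappa$, and the single ordinal $j(\kappa)$; a priori the tree can carry a great deal of information, in particular many subsets of $\kappa$, so ``pinned down by $u_2$'' is the wrong invariant --- the relevant one for $j(T_A)$ is $j(u_2)$.

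The tell is that your argument never uses the standing case hypothesis --- that $\Delta^1_2(z_0)$ determinacy fails in every generic extension, so that no generic extension has a proper class inner model over $z_0$ with a Woodin cardinal --- and that hypothesis is what makes the claim true. The paper's proof routes through Claim \ref{claim:trees}: over a real $z$ (in a $\Col(\omega,\kappa)$-generic extension) coding $V_\kappa$, the core model $K(z)$ exists because there is no Woodin, satisfies CH, and by Magidor's lemma contains $T_A$, whence $|\omega^\omega\cap L[T_A,z]|\le\omega_1$. Applying $j$ to this statement and noting $\omega_1^{M[g]}=\kappa^+$ gives $|\omega^\omega\cap L[j(T_A),z]|\le\kappa^+$; since $V_\kappa$ is countable in $L[j(T_A),z]$, distinct subsets of $V_\kappa$ in $L(j(T_A),V_\kappa)$ yield distinct reals there, which is the bound. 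So the smallness of $L(j(T_A),V_\kappa)$ is a core-model fact transferred through $j$, not an absoluteness fact about the Martin--Solovay construction; if the claim held unconditionally by the kind of argument you sketch, the proof of Theorem \ref{thm:sigma-1-4} would not need to split into cases at all.
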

\begin{proof}
 Take a generic extension $V[g]$ of $V$ by $\Col(\omega,\kappa)$ and take a real $z \in V[g]$ coding $V_\kappa$. Because $z$ is in a generic extension of $M$ by a poset of cardinality less than $j(\kappa)$ and $z_0 \in L[z]$, by Claim \ref{claim:trees} and the elementarity of $j$ we have $M[g] \models |\omega^\omega \cap L[j(T_A),z]| \le \omega_1$. Note that $\omega_1^{M[g]} = \kappa^+$.  Because $L(j(T_A),V_\kappa) \subset L[j(T_A),z]$ and $V_\kappa$ is countable in $L[j(T_A),z]$, every subset of $V_\kappa$ in $L(j(T_A),V_\kappa)$ produces a different real in $L[j(T_A),z]$ and the claim follows.
\end{proof}

Now by Lemma \ref{lem-strong-eta} with $\eta = \kappa^+$, if we let $G \subset \Col(\omega,\kappa^+)$ be a $V$-generic filter then in $V[G]$ there is a tree $\tilde{T}$ on $\omega \times \Ord$ such that the pair $(j(T_A), \tilde{T})$ is $\Col(\omega,\lambda)$-absolutely complementing. Let $H \subset \Col(\omega,\lambda)$ be a $V[G]$-generic filter. Note that every generic extension of $V$ by a poset in $V_\kappa$ satisfies $p[T_A] = \{x \in \omega^\omega : A(x)\}$ (this follows from Lemma \ref{lem:magidor} because small forcing preserves $u_2$.) Therefore by the elementarity of $j$ and the fact that $j(\kappa) > \lambda$ it follows that $M[G][H]$ satisfies $p[j(T_A)] = \{x \in \omega^\omega : A(x)\}$.

The models $M[G][H]$ and $V[G][H]$ have the same reals because $\powerset(\lambda)\subset M$, so $V[G][H]$ also satisfies $p[j(T_A)] = \{x \in \omega^\omega : A(x)\}$. Bcause the pair $(j(T_A), \tilde{T})$ projects to complements in $V[G][H]$, it follows that $V[G][H]$ satisfies $p[\tilde{T}] = \{x \in \omega^\omega : B(x)\}$. Therefore the tree $\tilde{T}$ witnesses condition ($\ast$) for $B$ and $\lambda$ in $V[G]$. The proof of Theorem \ref{thm:sigma-1-4} is complete.

\section{Acknowledgments}

The author thanks Paul Larson and Menachem Magidor for their helpful comments.


\begin{thebibliography}{10}

\bibitem{CaiSchProjWellord}
Andr{\'e}s~Eduardo Caicedo and Ralf Schindler.
\newblock Projective well-orderings of the reals.
\newblock {\em Archive for Mathematical Logic}, 45(7):783--793, 2006.

\bibitem{FenMagWoo}
Qi~Feng, Menachem Magidor, and W.~Hugh Woodin.
\newblock Universally {B}aire sets of reals.
\newblock In {\em Set theory of the continuum}, pages 203--242. Springer, 1992.

\bibitem{HauProjAbs}
Kai Hauser.
\newblock The consistency strength of projective absoluteness.
\newblock {\em Annals of Pure and Applied Logic}, 74(3):245--295, 1995.

\bibitem{Hjou2}
Greg Hjorth.
\newblock The size of the ordinal $u_2$.
\newblock {\em Journal of the London Mathematical Society}, 52(3):417--433,
  1995.

\bibitem{KanHigherInfinite}
Akihiro Kanamori.
\newblock {\em The Higher Infinite: Large Cardinals in Set Theory from Their
  Beginnings}.
\newblock Springer Monographs in Mathematics. Springer, 2003.

\bibitem{KecHomogeneousTrees}
Alexander~S. Kechris.
\newblock Homogeneous trees and projective scales.
\newblock In Alexander~S. Kechris, Benedikt L\"owe, and John~R. Steel, editors,
  {\em Wadge Degrees and Projective Ordinals: The Cabal Seminar, Volume {II}}.
  Cambridge University Press, 2012.

\bibitem{KecMosScales}
Alexander~S. Kechris and Yiannis~N. Moschovakis.
\newblock Notes on the theory of scales.
\newblock In Alexander~S. Kechris, Benedikt L\"owe, and John~R. Steel, editors,
  {\em Games, Scales and {S}uslin Cardinals: The Cabal Seminar, Volume {I}},
  volume~31 of {\em Lecture Notes in Logic}, pages 28--74. Cambridge University
  Press, Cambridge, 2008.

\bibitem{MarSolBasis}
Donald~A. Martin and Robert~M. Solovay.
\newblock A basis theorem for {$\Sigma^1_3$} sets of reals.
\newblock {\em The Annals of Mathematics}, 89(1):138--159, 1969.

\bibitem{MosDST2009}
Yiannis~N. Moschovakis.
\newblock {\em Descriptive Set Theory}, volume 155 of {\em Mathematical Surveys
  and Monographs}.
\newblock American Mathematical Society, Providence, Rhode Island, second
  edition, 2009.

\bibitem{NeeOptimal}
Itay Neeman.
\newblock Optimal proofs of determinacy.
\newblock {\em The Bulletin of Symbolic Logic}, 1(3):pp. 327--339, 1995.

\bibitem{SteCMIP}
John~R. Steel.
\newblock {\em The Core Model Iterability Problem}.
\newblock Lecture notes in logic. Springer, 1996.

\bibitem{SteDMT}
John~R. Steel.
\newblock The derived model theorem.
\newblock In {\em Logic Colloquium 2006}, volume~32 of {\em Lecture Notes in
  Logic}, pages 280--327. Cambridge University Press, 2009.

\bibitem{WilUBGenAbs}
Trevor~M. Wilson.
\newblock Universally {B}aire sets and generic absoluteness.
\newblock {\em Journal of Symbolic Logic}, 82(4):1229--1251, 2017.

\bibitem{WooProjUnif}
W.~Hugh Woodin.
\newblock On the consistency strength of projective uniformization.
\newblock In J.~Stern, editor, {\em Proceedings of the {H}erbrand Symposium:
  Logic Colloquium `81}, pages 365--384. North-Holland Pub.\ Co., 1982.

\end{thebibliography}

\end{document}